\newcounter{alphthm}
\newtheorem{thm}{Theorem}[section]
\newtheorem{lem}[thm]{Lemma}
\newtheorem{cor}{Corollary}[section]
\theoremstyle{definition}
\newtheorem{ex}{Example}
\newcommand{\be}{\begin{equation}}
\newcommand{\ee}{\end{equation}}
\newcommand{\pa}{{\partial}}
\newcommand{\e}{{\epsilon}}
\title{On a Class of Generalized Berwald Manifolds}
\author{A. Tayebi and F. Eslami}
\numberwithin{equation}{section}
\begin{document}
\maketitle
\begin{abstract}
The class of generalized Berwald metrics contains the class of Berwald metrics. In this paper, we characterize two-dimensional generalized Berwald  $(\alpha, \beta)$-metrics with vanishing S-curvature. Let $F=\alpha\phi(s)$, $s=\beta/\alpha$,  be  a two-dimensional   generalized Berwald  $(\alpha,\beta)$-metric on a manifold $M$.   Suppose that $F$ has vanishing S-curvature. We show that one of the following holds: (i) if $F$ is a regular metric, then it reduces to a  Riemannian metric of isotropic sectional curvature or a locally Minkowskian metric;  (ii) if $F$ is an almost regular metric that is not Riemannian nor locally Minkowskian, then we find the explicit form of $\phi=\phi(s)$ which obtains a generalized Berwald  metric that is neither a Berwald nor Landsberg nor a Douglas metric. This provides a generalization of Szab\'{o} rigidity theorem for the class of  $(\alpha,\beta)$-metrics. In the following, we prove that left invariant Finsler  surfaces with vanishing S-curvature must be  Riemannian surfaces of constant sectional curvature. Finally, we construct a family of odd-dimensional generalized Berwald Randers metrics.\\\\
{\bf {Keywords}}: Generalized Berwald metric, Berwald metric,  $(\alpha,\beta)$-metric, S-curvature.\footnote{ 2010 Mathematics subject Classification: 53C60, 53C25.}
\end{abstract}

%--------------------------------------------------------------------------------------------------------------------------------------------------
\section{Introduction}
%--------------------------------------------------------------------------------------------------------------------------------------------------
A Finsler metric $F$ on a manifold $M$ is called  a generalized Berwald metric if there exists a covariant derivative  $\nabla$ on $M$ such that the parallel translations induced by $\nabla$ preserve the Finsler function $F$ \cite{SzSz2}\cite{BSZI}\cite{TB}\cite{V1}. In this case, $F$ is called a generalized Berwald metric on $M$ and  $(M, F)$ is called a generalized Berwald  manifold. If the covariant derivative $\nabla$ is also torsion-free, then $F$ reduces to a Berwald metric. Therefore, the class of Berwald metrics belongs to the class of generalized Berwald metrics. The importance of the class of generalized Berwald manifolds lies  in the fact that these manifolds may have a rich isometry group \cite{Szabo}\cite{SzSz1}. For some  progress about the class of  generalized Berwald manifolds, see \cite{A15}, \cite{BM},  \cite{TB}, \cite{Vin0}, \cite{V1},  \cite{Vin2} and  \cite{V3}.

To find generalized Berwald metrics, one can consider the class of $(\alpha, \beta)$-metrics. An $(\alpha, \beta)$-metric is a Finsler metric defined by $F:=\alpha\phi(s)$, $s={\beta}/{\alpha}$, where  $\phi=\phi(s)$ is a smooth function on a symmetric interval $(-b_0, b_0)$ with certain regularity, $\alpha=\sqrt{a_{ij}(x)y^iy^j}$ is a Riemannian metric and  $\beta =b_i(x) y^i$ is a 1-form on the base manifold. The simplest $(\alpha, \beta)$-metrics are the Randers metrics $F=\alpha+\beta$ which  were discovered by G. Randers when he studied 4-dimensional  general relativity. These metrics have been widely applied in many areas of natural sciences, including physics, biology,  psychology, etc \cite{CSb}. In \cite{V1}, Vincze proved that a Randers metric $F=\alpha+\beta$ is a generalized Berwald metric if and only if dual vector field $\beta^\sharp$ is of constant Riemannian length. In \cite{TB}, Tayebi-Barzegari  generalized Vincze's result for  $(\alpha, \beta)$-metrics and showed that an $(\alpha, \beta)$-metric satisfying the so-called sign property is a generalized Berwald metric if and only if $\beta^\sharp$ is of constant Riemannian length. Then, Vincze showed that an $(\alpha, \beta)$-metric satisfying the regularity property $\phi'(0)\neq 0$ is a generalized Berwald metric if and only if $\beta^\sharp$ is of constant Riemannian length \cite{Vin0}.  In \cite{BM}, Bartelme${\ss}$-Matveev proved that a Finsler metric is a generalized Berwald metric if and only if it is monochromatic.

Generally, two-dimensional Finsler metrics have some different and special Riemannian and non-Riemannian curvature properties from the higher dimensions. For example, Bartelme${\ss}$-Matveev showed that except for torus and the Klein bottle, the other closed 2-dimensional manifolds can not have non-Riemannian generalized Berwald metrics \cite{BM}. In \cite{V3}, Vincze et al. showed that a connected generalized Berwald surface is a Landsberg surface if and only if it is a Berwald
surface. The S-curvature is constructed by Shen for given comparison theorems on Finsler manifolds \cite{CSb}. An interesting problem is to study  generalized Berwald metrics with vanishing S-curvature.  Here, we  characterize the class of two-dimensional generalized Berwald  $(\alpha,\beta)$-metrics with vanishing S-curvature and prove the following.

\begin{thm}\label{MainTHM}
Let $F=\alpha\phi(s)$, $s=\beta/\alpha$,  be  a two-dimensional   generalized Berwald  $(\alpha,\beta)$-metric on a connected manifold $M$. Suppose that $F$ has vanishing S-curvature.
Then one of the following holds:
\begin{description}
  \item[] (i) If $F$ is a regular metric, then it reduces to a  Riemannian metric of isotropic sectional curvature or a locally Minkowskian metric;
  \item[] (ii) If $F$ is an almost regular metric that is not Riemannian nor locally Minkowskian, then $\phi$ is given by
\begin{eqnarray}
\phi=c\exp\Bigg[\int_0^s{\frac{kt+q\sqrt{b^2-t^2}}{1+kt^2+qt\sqrt{b^2-t^2}}dt}\Bigg],\label{unix}
\end{eqnarray}
where  $c>0$, $q>0$, and $k$ are real constants, and $\beta$ satisfies
\be
r_{ij}=0, \ \ \ \  s_i=0.
\ee
In this case, $F$ is neither  a Berwald nor Landsberg nor a Douglas metric.
\end{description}
\end{thm}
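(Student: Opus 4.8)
The plan is to reduce everything to the S-curvature formula for $(\alpha,\beta)$-metrics and exploit the rigidity forced by the two-dimensionality together with the generalized Berwald hypothesis. First I would recall the known formula expressing the S-curvature $\mathbf{S}$ of an $(\alpha,\beta)$-metric $F=\alpha\phi(s)$ in terms of $\phi$, the Riemannian quantities $r_{ij}=\tfrac12(b_{i|j}+b_{j|i})$ and $s_{ij}=\tfrac12(b_{i|j}-b_{j|i})$ (covariant derivatives of $\beta$ with respect to $\alpha$), and the length $b=\|\beta\|_\alpha$. Imposing $\mathbf{S}=0$ yields a differential relation that couples the ODE structure of $\phi$ to the vanishing of certain combinations of $r_{ij}$ and $s_i$. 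The decisive extra input is the generalized Berwald assumption: by the results of Tayebi--Barzegari and Vincze quoted in the Introduction, this forces $\beta^\sharp$ to have constant Riemannian length, i.e. $b$ is constant on $M$, so that $r_{00}:=r_{ij}y^iy^j$ and the trace $r:=a^{ij}r_{ij}$ obey the constraint $r_0+s_0=0$ type identities. Combining constancy of $b$ with $\mathbf{S}=0$ should collapse the S-curvature equation into separate conditions on the ``base'' data $(r_{ij},s_i)$ and on the ``fibre'' data $\phi$.

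Next I would treat the two regularity regimes separately. In the regular case (i), I expect that vanishing S-curvature together with constant $b$ and the low dimension forces $r_{ij}=0$ and $s_i=0$ outright, after which the $(\alpha,\beta)$-structure degenerates: either $\beta$ is parallel, which makes $F$ Berwald and, on a surface, either locally Minkowskian or Riemannian, or the navigation/rigidity collapses $\phi$ to the Riemannian profile $\phi=\sqrt{1+s^2}$-type form. To pin down ``isotropic sectional curvature'' in the Riemannian subcase, I would invoke Szab\'o-type rigidity on surfaces: a two-dimensional Berwald (here Riemannian) space with vanishing S-curvature has its Gauss curvature controlled, giving the isotropic (i.e. pointwise constant) sectional curvature conclusion. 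This is where I expect to lean hardest on dimension $n=2$, since the Landsberg/Berwald coincidence on surfaces (Vincze et al.) and the single-sectional-curvature feature of surfaces are exactly what make the regular alternative so rigid.

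For the almost-regular, genuinely non-Riemannian, non-Minkowskian case (ii), the strategy is to first establish $r_{ij}=0$ and $s_i=0$ as in the theorem statement, and then to \emph{solve} the residual ODE for $\phi$ that the $\mathbf{S}=0$ equation leaves behind. With $r_{ij}=0,\ s_i=0$, the metric is of ``isotropic'' type where $\beta$ is a closed (indeed $s_{ij}=0$ forces $\beta$ parallel-like) one-form, and the S-curvature equation reduces to a first-order linear ODE for $\log\phi$. I would write $\mathbf{S}=0$ as
\begin{eqnarray}
\frac{\phi'(s)}{\phi(s)}=\frac{ks+q\sqrt{b^2-s^2}}{1+ks^2+qs\sqrt{b^2-s^2}},\nonumber
\end{eqnarray}
identifying the numerator and denominator as the natural combinations produced by the two-dimensional Berwald/S-curvature machinery (the $\sqrt{b^2-s^2}$ is forced by the surface geometry, where the orthogonal complement of $\beta^\sharp$ is one-dimensional), and then integrate from $0$ to $s$ to obtain \eqref{unix}, fixing the integration constant as $c=\phi(0)>0$ and recording the sign constraints $q>0$, $c>0$ that regularity of $F$ imposes. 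The main obstacle I anticipate is precisely this integration step together with justifying the exact algebraic shape of the right-hand side: I must verify that the two-dimensional reduction genuinely produces the combination $ks+q\sqrt{b^2-s^2}$ over $1+ks^2+qs\sqrt{b^2-s^2}$ and no more general form, which requires carefully exploiting that on a surface the frame $\{\beta^\sharp,\beta^{\sharp\perp}\}$ spans everything, so the only admissible tensorial contractions are those two. Finally I would confirm that the resulting $F$ is neither Berwald, Landsberg, nor Douglas by computing the relevant non-Riemannian invariants (e.g. the mean Landsberg curvature $\mathbf{J}$ and the mean Berwald/Douglas tensors) and showing they are nonzero for this explicit $\phi$, using the surface coincidence ``Landsberg $\Leftrightarrow$ Berwald'' from Vincze et al. to handle the Landsberg and Berwald cases simultaneously.
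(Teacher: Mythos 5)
Your overall skeleton (generalized Berwald $\Rightarrow$ $b=\|\beta\|_\alpha$ constant, hence $r_i+s_i=0$; then $\mathbf{S}=0$ plus constant $b$ yields the dichotomy ``Riemannian'' versus ``$r_{ij}=0,\ s_i=0$''; Szab\'o rigidity in the Berwald subcase) matches the paper's Lemmas \ref{M} and \ref{LS} and the first half of its proof. But the core of your case (ii) contains a genuine gap: you claim that after $r_{ij}=0$ and $s_i=0$ are established, ``the S-curvature equation reduces to a first-order linear ODE for $\log\phi$'' whose integration gives \eqref{unix}. This cannot work. In the S-curvature formula \eqref{SC} every term carries a factor $r_0+s_0$ or $r_{00}-2\alpha Qs_0$; once $r_{ij}=0$ and $s_i=0$, both factors vanish identically, so $\mathbf{S}=0$ holds automatically \emph{for every choice of} $\phi$ and imposes no condition whatsoever on $\phi$. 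There is no residual ODE to integrate, and the specific function \eqref{unix} cannot be extracted from the S-curvature equation. In the paper the ODE has a completely different source: the generalized Berwald hypothesis reduces the spray to $G^i=G^i_\alpha+\alpha Qs^i_{\ 0}$ (equation \eqref{045}), and on a \emph{surface} the Douglas curvature satisfies $D^i_{\ jkl|s}y^s=\mathfrak{D}_{jkl}y^i$ (Lemma \ref{LD}); since $\mathbf{S}=0$ forces $\mathbf{H}=0$, this yields $h^m_iB^i_{\ jkl|s}y^s=0$ (Lemma \ref{B2}), and contracting the resulting identity with $y_m$ and then with $b^jb^kb^l$ produces
\[
\big(\alpha \Theta_2+2\Lambda_1\Theta_1+\Lambda_2Q\big)\,s_{m0}s^{m}_{\ 0}=0 .
\]
It is the second-order ODE $\alpha\Theta_2+2\Lambda_1\Theta_1+\Lambda_2Q=0$ for $Q$ (Lemma \ref{alaki}), not the S-curvature equation, whose solution is \eqref{unix}; and the factor $s_{m0}s^{m}_{\ 0}$ is precisely what generates the dichotomy between cases (i) and (ii).

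A second error compounds this: you assert that in case (ii) ``$\beta$ is a closed (indeed $s_{ij}=0$ forces $\beta$ parallel-like) one-form.'' You are conflating $s_i=b^js_{ji}=0$ with $s_{ij}=0$. In case (ii) one has $r_{ij}=0$ and $s_i=0$ but necessarily $s_{ij}\neq 0$: if $s_{ij}=0$ as well, then $\beta$ would be parallel, $F$ would be Berwald (immediate from \eqref{046}), and Szab\'o rigidity would put you back in case (i), so case (ii) would be empty. The non-closedness of $\beta$ is exactly what makes the metric with $\phi$ as in \eqref{unix} non-Berwaldian, non-Landsbergian (via the two-dimensional identity \eqref{2L} together with $\mathbf{E}=0$), and non-Douglas. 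So the missing idea is the whole Berwald/Douglas-curvature mechanism on surfaces (Lemmas \ref{LD} and \ref{B2}); without it your plan cannot produce the ODE for $\phi$, and the heuristic appeal to ``the only admissible tensorial contractions'' on a surface does not substitute for it.
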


It is remarkable that, for a generalized Berwald  $(\alpha, \beta)$-metric  $F=\alpha\phi(s)$, $s=\beta/\alpha$, on an $n$-dimensional manifold $M$,  we show that ${\bf S}=0$ if and only if $F$ is Riemannian or  $\beta$ is a Killing form with  constant length (see Lemma \ref{LS}).

The celebrated  Szab\'{o} rigidity theorem states that  every  2-dimensional Berwald   surface is either locally Minkowskian  or Riemannian (see \cite{ShDiff}). Every Berwald metric has vanishing S-curvature \cite{TR}. Then,  Theorem \ref{MainTHM} is an extension of Szab\'{o}'s result for $(\alpha,\beta)$-metrics.

\smallskip

The condition of generalized Berwaldness  can not be dropped  from the assumption of Theorem \ref{MainTHM}. There are many  two-dimensional $(\alpha,\beta)$-metrics with vanishing S-curvature which are not Riemannian nor locally Minkowskian. For example, let us consider  Shen's Fish-Tank  Randers metric $F=\alpha+\beta$ on $\mathbb{R}^2$ given by following
\begin{eqnarray*}
F={\sqrt{ (xv-yu)^2  + (u^2+v^2) (1-x^2-y^2) } \over 1-x^2-y^2}+ {yu-xv\over 1- x^2-y^2 }.
\end{eqnarray*}
$F$ has vanishing S-curvature \cite{ShSK} while it is not a generalized Berwald metric ($\|\beta\|_\alpha = \sqrt{x^2 + y^2}$). This metric is not Riemannian. Also, $F$ has vanishing flag curvature while it is not locally Minkowskian.

In Theorem \ref{MainTHM},  the vanishing of  S-curvature is necessary. For example,  see the following.
\begin{ex}\label{ex2}
Let us consider $G:=\{(x, y)\in \mathbb{R}^2| y>0\}$ and define a multiplication on $G$ by $(x_1, y_1)\ast (x_2, y_2) := (x_2y_1 + x_1, y_1y_2)$, for $(x_i, y_i)\in G$, $i = 1, 2$. $(G, \ast)$ is a Lie group \cite{HM}. One can  introduce  a Randers metric $F$ on $G$ by
\be
F=\frac{\sqrt{2dx^2+2dxdx+2dy^2}}{y}+\frac{dx+dy}{y}.\label{MH}
\ee
Then
\begin{eqnarray*}
&&a_{11}=a_{22}=\frac{2}{y^2},\ \  \ a_{21}=a_{12}=\frac{1}{y^2},  \  \ \  b_1=b_2=\frac{1}{y},\\
&& a^{11}=a^{22}=\frac{2}{3}y^2, \ \ \ a^{12}=a^{21}=-\frac{1}{3}y^2, \  \ \  b^1=b^2=\frac{1}{3}y.
\end{eqnarray*}
It is easy to see that $\alpha$ is a positive-definite Riemannian metric. Also, we get
\[
b:=\|\beta\|_{\alpha}=\sqrt{a^{ij}b_ib_j}=\sqrt{b_ib^i}=\sqrt{b_1b^1+b_2b^2}=\sqrt{\frac{2}{3}}
\]
It follows that $F$ is a positive-definite  Randers metric on $G$. In \cite{V1}, Vincze showed  that a Randers metric $F=\alpha+\beta$ is a generalized Berwald metric if and only if $\beta$ is of constant length with respect to $\alpha$. Then, the Randers metric  defined by \eqref{MH} is a generalized  Berwald metric. We have
\[
d\beta=\frac{1}{y^2}dx\wedge dy\neq0.
\]
Thus $\beta$ is not closed which implies that $F$ can not be a Berwald metric. We have
\be
s_{11}=s_{22}=0, \ \ \  s_{12}=\frac{1}{y^2}=-s_{21}, \ \ \ s_1=-\frac{1}{3y}, \ \ \  s_2=\frac{1}{3y}.\label{ss}
\ee
where $s_{ij}$ and $s_i$ defined by \eqref{rs1} and \eqref{rs2}. We claim that $F$ has not vanishing S-curvature. On contrary, let ${\bf S}=0$. Then by Lemma 3.2.1 in \cite{CSb},  we have $r_{ij}+b_is_j+b_js_i=0$. Contracting it with $b^i$ yields
\[
r_j+b^2s_j=0.
\]
By considering  $r_i+s_i=0$ and $b^2=2/3$, we must have $s_i=0$ which  contradicts with \eqref{ss}. Then, the Randers metric \eqref{MH} is a generalized Berwald metric with ${\bf B}\neq 0$, ${\bf L}\neq 0$,  ${\bf D}\neq 0$, ${\bf S}\neq 0$ and ${\bf E}\neq 0$. We claim that $F$ is not R-quadratic. On the contrary, let the Finsler metric $F$ defined by \eqref{MH} be  R-quadratic. By  Theorem 6.2.1 in \cite{CSb}, if the two-dimensional Randers metric \eqref{MH} is R-quadratic then it has constant S-curvature ${\bf S}=3cF$, for some real constant  $c$. In \cite{XDg}, Xu-Deng proved that every homogeneous Finsler metric of isotropic S-curvature has vanishing S-curvature. Thus, we must have ${\bf S}=0$ which is a contradiction. Therefore, the Randers metric \eqref{MH}  is not R-quadratic.
\end{ex}

Theorem \ref{MainTHM} may not be held for an $(\alpha,\beta)$-metric of  constant S-curvature. For example, at a point $x=(x^1, x^2)\in \mathbb{R}^2$ and in the direction $y=(y^1, y^2)\in T_x\mathbb{R}^2$, consider the Riemannian metric $\alpha=\alpha(x,y)$ and one form $\beta=\beta(x, y)$ as follows
\begin{eqnarray}
\alpha(x,y):=\sqrt{(y^1)^2+e^{2x^1}(y^2)^2}, \ \ \ \beta(x, y):=y^1.\label{mw}
\end{eqnarray}
Then $s_{ij}=0$ and $r_{ij}=b^2a_{ij}-b_ib_j$ which yield $r_i+s_i=0$. If $\phi=\phi(s)$ satisfies
\begin{eqnarray}
\Phi=-6k\frac{\phi\Delta^2}{b^2-s^2},\label{P}
\end{eqnarray}
for some constant $k$, then $F=\alpha\phi(\beta/\alpha)$ has constant S-curvature  ${\bf S}=3kF$ (see \cite{ChSh3}). Here, $\Delta=\Delta(b, s)$ and $\Phi=\Phi(b, s)$ defined by \eqref{del} and \eqref{phi}, respectively.  The  existence of regular solution of \eqref{P} for
arbitrary $k\in \mathbb{R}$, when $\alpha$ and $\beta$ are given by \eqref{mw}, is proved  in \cite{MW}. It is easy  to see that $F$ is a generalized Berwald metric while it is not a Berwald metric.

\smallskip

We must mention that Theorem \ref{MainTHM} does not hold  for Finsler metrics of $dim(M)\geq 3$. Denote generic tangent vectors on $\mathbb{S}^3$ as $u {\partial}/{\partial x}+v {\partial}/{\partial y}+  w {\partial}/{\partial z}$. The Finsler functions for Bao-Shen's Randers metrics $F=\alpha+\beta$ are given by the following
\[
F=  \frac{ \, \sqrt{ \mathcal{K} ( c u - z v + y w )^2 + ( z u + c v - x w )^2+ (x v + c w-y u)^2 } \, }{ 1 + x^2 + y^2 + z^2 }
\pm \frac{ \,  \sqrt{ \, \mathcal{K}-1 \, } \ ( c u - z v + y w ) \, } { \, 1 + x^2 + y^2 + z^2 \, } ,
\]
where $\mathcal{K}>1$ is a real constant. For these metrics, we have
\[
b:=\|\beta\|_{\alpha}=\sqrt{1-\frac{1}{\mathcal{K}}}
\]
One can see that, the  one form $\beta$ is not closed, and then $F$ can not be  a Douglas metric.  This family of Randers metrics is generalized Berwald metrics with ${\bf S}=0$  which are not Berwaldian.

\bigskip

In \cite{BM}, it is proved that  every 3-dimensional closed manifold admits a non-Riemannian generalized Berwald metric. However, the closeness condition is very restrictive.  Indeed, for every manifold $M$ with  $dim(M)\geq 3$, one can construct  generalized Berwald  $(\alpha, \beta)$-metrics  that  are not  Berwaldian.
\begin{ex}
 The projective spherical metric on $\mathbb{R}^3$ is given by the following
\be
\alpha:=\frac{\sqrt{(1+||X||^2)||Y||^2-\langle X,Y\rangle^2}}{1+\langle X,X\rangle},\,\, X\in \mathbb{R}^3,\,\,Y\in T_X\mathbb{R}^3,\label{R}
\ee
where $\langle, \rangle$ and $||.||$ denote the Euclidean inner product and norm on $\mathbb{R}^3$, respectively. Put $X=(x,y,z)$ and $Y=(u,v,w)$. Suppose that $\beta=\kappa(b_1u+b_2v+b_3w)$ is a Killing 1-form
of $\alpha$, where $0<\kappa<1$.  By a simple calculation, we get
\begin{eqnarray}
b_1=\frac{A^1_2y+A^1_3z+C^1}{1+\langle X, X \rangle}, \ \  b_2=\frac{A^2_1x+A^2_3z+C^2}{1+\langle X, X \rangle},  \ \
b_3=\frac{A^3_1x+A^3_2y+C^3}{1+\langle X, X \rangle},\label{F1}\
\end{eqnarray}
where $A=(A^i_j)$ is an antisymmetric real matrix,  and $C=(C^i)$ is a constant vector in $\mathbb{R}^3$. Let us put
\be
C=(0,1,0), \ \ \ A^1_2=A^2_3=0, \ \ A^1_3=1.\label{F2}
\ee
In this case, $\beta$ is  a Killing 1-form  with  $\|\beta\|_{\alpha}=\kappa<1$ such that is not  closed. According to  Shen's theorem in \cite{ShC}, a regular $(\alpha, \beta)$-metric is a Berwald metric if and only if $\beta$ is parallel with respect to $\alpha$. Using  the Riemannian metric \eqref{R} and 1-form $\beta$ satisfying \eqref{F1} and  \eqref{F2}, one can construct generalized Berwald $(\alpha, \beta)$-metrics which are not Berwaldian.
\end{ex}

\bigskip

Homogeneous Finsler manifolds are those Finsler manifolds $(M, F)$ that the orbit of the natural  action  of $I(M, F)$ on $M$ at any point of $M$ is the whole $M$. For the class of homogeneous generalized Berwald $(\alpha, \beta)$-metrics, we prove the following.
\begin{cor}\label{cor1}
Let $F=\alpha\phi(s)$, $s=\beta/\alpha$,  be  a two-dimensional  homogeneous generalized Berwald  $(\alpha,\beta)$-metric on a manifold $M$. Then $F$ has isotropic  S-curvature  if and only if it is a Riemannian  metric of constant sectional curvature or a locally Minkowskian metric.
\end{cor}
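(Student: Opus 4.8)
The plan is to establish the two implications separately, with all the substance in the forward direction. For the backward implication, suppose first that $F$ is a Riemannian metric of constant sectional curvature; being Riemannian, it has vanishing S-curvature. If instead $F$ is locally Minkowskian, then it is a Berwald metric and again ${\bf S}=0$, since every Berwald metric has vanishing S-curvature \cite{TR}. In both cases ${\bf S}=0$ is trivially isotropic (with $c(x)\equiv 0$), which settles this direction.

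For the forward implication, assume $F$ is a two-dimensional homogeneous generalized Berwald $(\alpha,\beta)$-metric with isotropic S-curvature. The first step is to improve ``isotropic'' to ``vanishing'': by the Xu--Deng theorem \cite{XDg}, a homogeneous Finsler metric of isotropic S-curvature must satisfy ${\bf S}=0$. Thus $F$ becomes a two-dimensional generalized Berwald $(\alpha,\beta)$-metric with vanishing S-curvature, so Theorem \ref{MainTHM} applies and splits the discussion into the regular case (i) and the almost regular case (ii).

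The key step is then to discard case (ii). Since $(M,F)$ is a homogeneous Finsler manifold, $F$ is by definition a regular Finsler metric, that is, smooth and strongly convex on all of $TM\setminus\{0\}$; the transitive action of $I(M,F)$ carries the regular indicatrix at one point onto every fibre. The function $\phi$ produced in part (ii) of Theorem \ref{MainTHM}, given by \eqref{unix}, yields only an almost regular metric, which degenerates at $s=\pm b$ and hence cannot arise from a homogeneous Finsler structure. Therefore $F$ falls under case (i): it reduces to a Riemannian metric of isotropic sectional curvature or to a locally Minkowskian metric.

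Finally, in the Riemannian alternative I would upgrade ``isotropic'' to ``constant''. If $F=\alpha$, then its Finsler isometry group coincides with the Riemannian isometry group $I(M,\alpha)$, so $\alpha$ is a homogeneous Riemannian surface; as the Gauss curvature is an isometry invariant, transitivity forces it to take the same value at every point, and in dimension two the sectional curvature equals the Gauss curvature. Hence $\alpha$ has constant sectional curvature, giving exactly the claimed dichotomy. I expect the elimination of case (ii) to be the main obstacle: one must argue cleanly that homogeneity excludes the almost regular solutions of \eqref{unix}, for otherwise Theorem \ref{MainTHM}(ii) would supply generalized Berwald metrics that are neither Riemannian nor locally Minkowskian yet have vanishing S-curvature.
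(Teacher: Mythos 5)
Your proposal is correct and takes essentially the same route as the paper: both directions hinge on the Xu--Deng theorem \cite{XDg} to reduce isotropic S-curvature to ${\bf S}=0$, then on Theorem \ref{MainTHM}, and finally on the isometry-invariance of the curvature scalar plus transitivity to upgrade ``isotropic'' to ``constant'' sectional curvature. The only differences are cosmetic: the paper first derives ${\bf K}={\bf K}(x)$ via the Akbar-Zadeh theorem and makes it constant by homogeneity before citing Theorem \ref{MainTHM}, whereas you apply the theorem first and upgrade afterwards, and you make explicit two points the paper leaves implicit, namely the trivial converse and the exclusion of the almost regular case (ii) on the grounds that a homogeneous Finsler metric is regular by definition.
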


\bigskip

Let $G$ be a connected Lie group with a bi-invariant Finsler metric ${\bar F}$, and $H$ a closed
subgroup of $G$. Denote the Lie algebras of $G$ and $H$ as $\mathfrak{g}$ and $\mathfrak{h}$, respectively. Let $\rho$ be
the natural projection from $G$ to $M = G/H$. Then  there exists a uniquely defined $G$-invariant metric $F$ on $M$ such that for any $g\in G$, the tangent map $\rho_{*}: \big(T_gG, {\bar F}(g, .)\big) \rightarrow \big(T_{\pi(g)}M, F(\pi(g), .)\big)$ is a submersion (see Lemma 3.1 in \cite{XDn}). The $G$-invariant Finsler metric $F$ is called the normal
homogeneous metric induced by ${\bar F}$. The pair $(M, F)$ is said the normal homogeneous space induced by $\rho : G \rightarrow M = G/H$ and ${\bar F}$.  For normal homogeneous generalized Berwald metrics, we have the following.
\begin{cor}\label{cor2}
Every two-dimensional  normal homogeneous generalized Berwald  $(\alpha,\beta)$-metric is a  Riemannian metric of non-negative constant sectional  curvature or a locally Minkowskian metric.
\end{cor}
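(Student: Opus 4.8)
The plan is to derive Corollary~\ref{cor2} from Corollary~\ref{cor1} by checking its hypotheses for a normal homogeneous metric, and then to sharpen the Riemannian alternative to \emph{non-negative} curvature. First I would record that $F$ is homogeneous in the required sense: by construction $F$ is $G$-invariant on $M=G/H$ and $G$ acts transitively, so $I(M,F)$ contains a transitive subgroup and $(M,F)$ is a two-dimensional homogeneous generalized Berwald $(\alpha,\beta)$-space. Corollary~\ref{cor1} then becomes applicable as soon as the S-curvature of $F$ is shown to be isotropic.

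The decisive step is to prove that a normal homogeneous metric has vanishing S-curvature. I would use the defining submersion $\rho_{*}\colon(T_gG,\bar F)\to(T_{\pi(g)}M,F)$ together with the bi-invariance of $\bar F$. A bi-invariant Finsler metric is a Berwald metric, so $\bar F$ has $\mathbf S=0$ by \cite{TR}; moreover the bi-invariance makes the reductive decomposition $\mathfrak g=\mathfrak h\oplus\mathfrak m$ naturally reductive, and for a naturally reductive homogeneous space the distortion is constant along the (homogeneous) geodesics, whence $\mathbf S=0$ descends to $(M,F)$. Having established $\mathbf S=0$, which is a fortiori isotropic, Corollary~\ref{cor1} forces $F$ to be either a Riemannian metric of constant sectional curvature or a locally Minkowskian metric.

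It remains to exclude negative curvature in the Riemannian alternative. Here I would invoke the Finsler analogue of O'Neill's submersion formula: the total space $(G,\bar F)$ has non-negative flag curvature, its flag curvatures being controlled by squared bracket lengths $\|[X,Y]\|^{2}\ge 0$, and the horizontal curvature comparison across the submersion $\rho$ can only raise the flag curvature downstairs, so $(M,F)$ has non-negative flag curvature as well. When $F$ is Riemannian its flag curvature coincides with the sectional curvature, so the constant found above must be $\ge 0$; equivalently, the hyperbolic plane, the only two-dimensional space form of negative curvature, cannot be normal homogeneous and is thereby ruled out. The locally Minkowskian alternative is left untouched, giving precisely the stated dichotomy.

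The main obstacle I expect lies in the two transfers through $\rho$, namely propagating $\mathbf S=0$ and the curvature sign from $(G,\bar F)$ down to $(M,F)$ in a setting where the $(\alpha,\beta)$-metric may be only \emph{almost regular}. The S-curvature descent requires the naturally reductive identity for the distortion to hold where $\phi$ is merely almost regular, and the curvature-sign descent requires an O'Neill-type estimate for Finsler submersions in which the fundamental tensor degenerates; controlling the integrability tensor of the horizontal distribution there is the delicate point. By contrast, the final recognition that a homogeneous Riemannian surface of constant non-negative curvature is $\mathbb{S}^2$ or the flat plane is entirely routine.
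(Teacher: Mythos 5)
Your proposal has the same skeleton as the paper's proof: everything reduces to the two facts that a normal homogeneous Finsler space automatically has vanishing S-curvature and non-negative flag curvature, after which the machinery of Corollary \ref{cor1} (Theorem \ref{MainTHM}, Akbar-Zadeh's theorem, and homogeneity forcing ${\bf K}=constant$) yields exactly the stated dichotomy with the curvature sign. The difference is that the paper obtains both facts by a one-line citation of Xu--Deng \cite{XDn}, whereas you attempt to re-derive them through the submersion $\rho$, and it is precisely there that your argument is a sketch rather than a proof --- indeed you flag the two ``transfers'' as the expected obstacles without resolving them. Two caveats on your sketches. First, $\mathbf{S}=0$ does not ``descend'' through a Finsler submersion in any general sense, and the Berwald property of the bi-invariant metric $\bar F$ is not the relevant input; the mechanism in \cite{XDn} is that horizontal geodesics of $\bar F$ project to geodesics of $(M,F)$, so every geodesic of $M$ is the orbit of a one-parameter subgroup, and for such geodesic-orbit spaces the distortion is constant along geodesics, which gives $\mathbf{S}=0$ directly on $M$. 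This is essentially your ``distortion'' remark, but it must be run downstairs on $M$, not deduced from $\mathbf{S}=0$ upstairs, and invoking ``naturally reductive'' in the Finsler category requires a definition and a proof that normal homogeneity implies it. Second, the O'Neill-type statement that flag curvature does not decrease under a Finsler submersion is itself a nontrivial theorem (it is the content of Xu--Deng's non-negativity result, building on \'{A}lvarez Paiva--Dur\'{a}n submersion theory), not a routine comparison. So your conclusion is sound provided one grants these two facts --- which is exactly what the paper does by citing \cite{XDn} --- but as a self-contained argument the proposal leaves its two central lemmas unproven.
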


A Finsler metric $F$ on a manifold $M$ is called  a generalized normal homogeneous metric if for every $x, y \in M$, there exists a $\delta(x)$-translation  of $(M, F)$ sending $x$ to $y$ (see \cite{ZD}).  In this paper, we consider two-dimensional  homogeneous generalized Berwald Randers metric of generalized normal-type and prove the following.
\begin{cor}\label{cor3}
Every two-dimensional generalized normal homogeneous generalized Berwald Randers metric must be a  Riemannian metric of non-negative constant sectional  curvature or a locally Minkowskian metric.
\end{cor}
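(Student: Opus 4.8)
The plan is to reduce Corollary \ref{cor3} to Corollary \ref{cor1} by first forcing the S-curvature to vanish and then extracting the sign of the sectional curvature from the geometry of the $\delta$-translations. The starting observation is that a generalized normal homogeneous metric is, in particular, homogeneous: by definition, for every pair $x,y\in M$ there exists a $\delta(x)$-translation of $(M,F)$ sending $x$ to $y$, and a $\delta(x)$-translation is an isometry. Hence $I(M,F)$ acts transitively on $M$, so $(M,F)$ is a homogeneous Finsler surface and Corollary \ref{cor1} becomes available as soon as we control the S-curvature.

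The crux of the argument is to show that a two-dimensional generalized normal homogeneous Randers metric has isotropic S-curvature. Here I would invoke the structure of generalized normal homogeneous metrics from \cite{ZD}: the $\delta(x)$-translations act along geodesics like Clifford--Wolf type translations, which constrains the volume distortion and forces $\mathbf{S}$ to be of the isotropic form $\mathbf{S}=(n+1)cF$ for some function $c$ on $M$. This is precisely where the Randers hypothesis enters, since the isotropic-S-curvature description is cleanest in terms of the Randers navigation data. Combining this with homogeneity, Xu--Deng's theorem \cite{XDg}, that every homogeneous Finsler metric of isotropic S-curvature in fact has vanishing S-curvature, then yields $\mathbf{S}=0$; equivalently, through Lemma \ref{LS}, one obtains that $\beta$ is a Killing form of constant $\alpha$-length (or $F$ is already Riemannian).

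With $\mathbf{S}=0$ in hand, a positive-definite Randers metric automatically satisfies $b=\|\beta\|_{\alpha}<1$ and is therefore regular, so part (i) of Theorem \ref{MainTHM} applies through Corollary \ref{cor1} and forces $F$ to be either a Riemannian metric of constant sectional curvature or a locally Minkowskian metric. To upgrade ``constant sectional curvature'' to ``non-negative constant sectional curvature'', I would then use that every generalized normal homogeneous Finsler space has non-negative flag curvature (the Finsler analogue, used in \cite{ZD}, of Berestovski\u{\i}'s theorem for $\delta$-homogeneous Riemannian spaces). On a surface the flag curvature of the Riemannian reduction coincides with the Gauss (sectional) curvature, so its non-negativity gives exactly a Riemannian metric of non-negative constant sectional curvature, while the locally Minkowskian alternative has vanishing flag curvature and is consistent with the same bound.

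The step I expect to be the main obstacle is the second one: rigorously passing from the mere existence of $\delta(x)$-translations to the pointwise, direction-dependent statement that $\mathbf{S}$ is isotropic. The difficulty is that $\delta$-homogeneity is a coarse metric condition controlling displacement functions rather than the full jet of the metric, whereas isotropy of $\mathbf{S}$ is a sharp curvature-type identity. Bridging this gap requires exploiting the explicit Randers structure and the interaction between the $\delta(x)$-translations and the induced Killing fields of constant $\alpha$-length, with Lemma \ref{LS} serving as the link that converts that constant-length Killing condition into $\mathbf{S}=0$.
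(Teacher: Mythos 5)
Your overall skeleton matches the paper's proof: get $\mathbf{S}=0$, apply Theorem \ref{MainTHM} (via Corollary \ref{cor1}, using homogeneity to upgrade isotropic to constant curvature), and obtain the non-negativity of the sectional curvature from the fact that generalized normal homogeneous Randers spaces have non-negative flag curvature --- the paper cites exactly this, Proposition 3.13 of \cite{ZD}, for the last step. Your preliminary observations (that $\delta(x)$-translations are isometries, so the metric is homogeneous, and that a positive-definite Randers metric is regular so case (i) of Theorem \ref{MainTHM} applies) are also correct.

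The genuine gap is the step you yourself flag as the main obstacle: you never establish that the S-curvature is isotropic. Your proposed bridge --- that the $\delta(x)$-translations ``act like Clifford--Wolf type translations'' and ``constrain the volume distortion'', forcing $\mathbf{S}=(n+1)cF$ --- is a heuristic, not an argument. $\delta$-homogeneity is a coarse displacement condition, and nothing in your sketch converts it into the pointwise identity between $\partial G^i/\partial y^i$ and the logarithmic derivative of the Busemann--Hausdorff volume form that isotropy of $\mathbf{S}$ demands; as written, this step would not survive scrutiny. The paper avoids the issue entirely by quoting Corollary 3.11 of Zhang--Deng \cite{ZD}, which states outright that every generalized normal homogeneous Randers metric has \emph{vanishing} S-curvature. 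Once you use that citation, your detour through isotropic S-curvature and Xu--Deng's theorem \cite{XDg} becomes unnecessary (though it would be logically sound if the isotropy claim were available): the fix is simply to replace your heuristic middle step by the direct reference to \cite{ZD}, Corollary 3.11, after which the rest of your argument goes through as written.
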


\smallskip

Let $(G, \cdot)$ be  a connected Lie group with identity element $e$, and $\lambda_{ g}$ denotes the left  translation by $g\in G$. A Finsler metric $F$ on $G$ is called a left invariant Finsler metric if it satisfies $F \circ (\lambda_{\bf g})_{*} = F$ for any $g\in G$. In \cite{A15}, Aradi proved that left invariant Finsler metrics are generalized Berwald metrics.  For this class of Finsler metrics, we have the following.
\begin{thm}\label{MainTHM2}
Every   left invariant Finsler surface has vanishing S-curvature  if and only if it is  a Riemannian metric of constant sectional curvature.
\end{thm}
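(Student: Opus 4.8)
The plan is to prove the two implications separately; the forward direction is immediate and all the content sits in the converse. For the ``if'' direction, assume $F$ is a Riemannian metric of constant sectional curvature. The Cartan tensor of any Riemannian metric vanishes, so the distortion is constant on each tangent space and hence $\mathbf{S}=0$; equivalently, every Riemannian metric is Berwaldian, and Berwald metrics have vanishing S-curvature by \cite{TR}. The group structure plays no role here.

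For the ``only if'' direction, suppose $F$ is a left invariant Finsler surface with $\mathbf{S}=0$. Two structural facts are available at once. First, since the left translations $\lambda_{g}$ act transitively and satisfy $F\circ(\lambda_{g})_{*}=F$, the surface is homogeneous, so every scalar isometry invariant is constant on the group and every non-Riemannian scalar is determined by the induced Minkowski norm on $\mathfrak{g}$. Second, by Aradi's theorem \cite{A15}, $F$ is a generalized Berwald metric, which lets me invoke Vincze's rigidity result \cite{V3}: a connected generalized Berwald surface is Landsbergian if and only if it is Berwaldian. Thus it suffices to upgrade the hypothesis $\mathbf{S}=0$ to the Landsberg condition $\mathbf{J}=0$ and then to conclude, via \cite{V3}, that $F$ is a Berwald metric.

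The core step is therefore to show that a homogeneous surface with $\mathbf{S}=0$ is Landsbergian, and this is where I expect the main obstacle to lie. I would work in the Berwald frame $\{e_{1},e_{2}\}$ with $e_{2}=\ell$ and main scalar $I$, using that in two dimensions the mean Landsberg curvature $\mathbf{J}$ is governed by the flag-direction derivative $I_{;2}$ (so that $\mathbf{J}=0$ is equivalent to $I_{;2}=0$), while $\mathbf{S}$ and the mean Berwald curvature $\mathbf{E}$ are governed by the distortion $\tau$ through $\tau_{\cdot i}=I_{i}$ and $\mathbf{E}=\tfrac{1}{2}\mathbf{S}_{\cdot\cdot}$. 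The hypothesis $\mathbf{S}=0$ forces $\mathbf{E}=0$, i.e. $\tau$ is affine along the geodesic flow, and homogeneity forces $\tau$ and $I$ to be left invariant; the plan is to feed both constraints into the two-dimensional structure equations to kill $I_{;2}$. The delicate point is that the vertical derivative of $\mathbf{S}$ and the mean Landsberg curvature differ by Berwald-curvature correction terms, so $\mathbf{S}=0$ does \emph{not} imply the Landsberg condition for a general metric; the homogeneity of $F$ must be used in an essential way precisely to control those correction terms, and the frame bookkeeping is where the real work lies.

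It remains to identify the Berwald metric. By Szab\'{o}'s rigidity theorem (see \cite{ShDiff}), a two-dimensional Berwald surface is either Riemannian or locally Minkowskian. In the Riemannian case the Gaussian curvature is a scalar isometry invariant, hence constant by transitivity, so $F$ has constant sectional curvature; this is exactly the upgrade of ``isotropic'' to ``constant'' that makes the present statement sharper than Theorem \ref{MainTHM}(i). The locally Minkowskian alternative is the second point requiring care, and I would dispatch it through the classification of connected two-dimensional Lie groups into the abelian group and the affine $ax+b$ group: on the affine group the analysis returns the Riemannian metric of constant negative curvature, while the abelian case returns a flat metric, with the Euclidean plane as its Riemannian representative. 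Handling this last case carefully, together with the implication $\mathbf{S}=0\Rightarrow\mathbf{J}=0$ of the previous paragraph, carries the weight of the whole proof.
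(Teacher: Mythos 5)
Your proposal has a genuine gap at exactly the point you yourself flag as the core step: the implication $\mathbf{S}=0\Rightarrow\mathbf{J}=0$ for a left invariant surface is never actually proved. You correctly observe that $\mathbf{S}=0$ does not imply the Landsberg condition for a general Finsler metric, and that homogeneity must enter ``in an essential way,'' but what follows is only a declaration of intent --- feed $\mathbf{E}=0$ and the left invariance of the distortion and main scalar into the two-dimensional structure equations --- with no computation and no argument that the frame bookkeeping closes. Since everything downstream (Vincze's theorem \cite{V3}, Szab\'o rigidity, the Lie-group case analysis) hangs on this unestablished claim, the proposal is a strategy outline rather than a proof of the hard direction.

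It is worth seeing how the paper avoids this difficulty entirely. It reduces to a non-Abelian two-dimensional Lie group and applies the homogeneous (Killing-frame) S-curvature formula of Xu--Deng \cite{XDg}: for a unit vector $y\in\mathfrak{g}$ with $\mathbf{g}_y$-orthonormal basis $e_1=y$, $e_2$, one has $\mathbf{S}(y)=-\mathbf{I}(\eta)=-\mathbf{C}_y(\eta,e_2,e_2)$, where $\eta$ is the spray vector field on $\mathfrak{g}$; since $\eta(y)$ is a nonzero multiple of $e_2$ for almost all $y$, the hypothesis $\mathbf{S}=0$ forces the main scalar $\mathbf{C}_y(e_2,e_2,e_2)$ to vanish, and in dimension two this kills the whole Cartan torsion, so $F$ is Riemannian outright --- no detour through Landsberg, Berwald, or Szab\'o is needed. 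Constant curvature then follows from homogeneity and the Akbar-Zadeh argument already used for Corollary \ref{cor1}. Note also that your disposal of the locally Minkowskian branch would fail even if the core step were granted: on the Abelian group $\mathbb{R}^2$ every non-Euclidean Minkowski norm gives a left invariant, flat, locally Minkowskian metric with $\mathbf{S}=0$ that is \emph{not} Riemannian, so that branch cannot be ``dispatched'' into a Euclidean representative; this is precisely why the paper restricts at the outset to the non-Abelian case, which is the only case in which the stated conclusion can be derived.
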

By considering Theorems \ref{MainTHM} and \ref{MainTHM2}, it seems that two-dimensional generalized Berwald $(\alpha, \beta)$-metrics with vanishing S-curvature may be only Riemannian. But we do not find a short proof for this conjecture.

\smallskip
A Finsler metric $F$ on a manifold $M$ is called an isotropic Berwald metric, if its Berwald curvature is given by
\begin{eqnarray}\label{IBCurve}
\nonumber{\bf B}_y(u,v,w)=\!\!\!&&\!\!\! cF^{-1}\Big\{{\bf h}(u,v)\big (w-\textbf{g}_y(w,\ell)\ell\big)+{\bf h}(v, w)\big(u-\textbf{g}_y(u,\ell)\ell\big)
\\ \!\!\!&&\!\!\! \qquad\qquad \ \ \ \ \ \ \ \ \qquad +{\bf h}(w, u)\big(v-\textbf{g}_y(v,\ell)\ell\big)+2F{\bf C}_y(u, v, w) \ell\Big\}.
\end{eqnarray}
where ${\bf h}_y(u,v)={\bf g}_y(u,v)-F^{-2}(y){\bf g}_y(y,u){\bf g}_y(y,v)$ is the angular form in direction $y$, ${\bf C}$ denotes the Cartan torsion of $F$ and  $c\in C^{\infty}(M)$. Every Berwald metric is an isotropic Berwald metric with $c=0$. The Funk metrics are isotropic Berwald metrics with $c=1/2$. As a straightforward conclusion of Theorem \ref{MainTHM2},  one can get  the following.

\begin{cor}\label{cor4}
Every   left invariant  isotropic Berwald surface  must be a Riemannian surface of constant sectional curvature.
\end{cor}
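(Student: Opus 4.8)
The plan is to deduce the statement from Theorem \ref{MainTHM2} by showing that a left invariant isotropic Berwald surface necessarily has vanishing S-curvature. Since Theorem \ref{MainTHM2} already identifies the left invariant Finsler surfaces with ${\bf S}=0$ as exactly the Riemannian metrics of constant sectional curvature, the whole task reduces to the implication \emph{left invariant $+$ isotropic Berwald $\Rightarrow$ ${\bf S}=0$}. I would establish this through the chain: isotropic Berwald $\Rightarrow$ isotropic S-curvature; left invariant $\Rightarrow$ homogeneous; and homogeneous $+$ isotropic S-curvature $\Rightarrow$ ${\bf S}=0$.

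The first link is the substantive one. Let $F$ be a surface ($n=\dim M=2$) whose Berwald curvature has the isotropic form \eqref{IBCurve} for some $c=c(x)$. Taking the trace of \eqref{IBCurve} produces the mean Berwald curvature ${\bf E}$; using $\partial_{y^i}\partial_{y^j}F=F^{-1}h_{ij}$ and noting that the Cartan-torsion term $2F{\bf C}_y(u,v,w)\ell$ contracts trivially in the trace against $\ell$, one finds ${\bf E}=\frac{n+1}{2}\,cF^{-1}{\bf h}$. Since the mean Berwald curvature is one half the vertical Hessian of the S-curvature, $E_{ij}=\frac12\,\partial_{y^i}\partial_{y^j}{\bf S}$, this is precisely the statement that an isotropic Berwald metric is of isotropic S-curvature, ${\bf S}=(n+1)cF=3cF$, the additive $1$-form that a priori survives the vertical integration being excluded by the full (untraced) content of \eqref{IBCurve}. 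This is the standard fact that every isotropic Berwald metric has isotropic S-curvature.

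For the remaining two links, recall that a left invariant Finsler metric is homogeneous, its isometry group acting transitively by left translations. Hence, by the Xu-Deng theorem already invoked in Example \ref{ex2} --- every homogeneous Finsler metric of isotropic S-curvature has vanishing S-curvature \cite{XDg} --- we conclude ${\bf S}=0$. Then $F$ is a left invariant surface with vanishing S-curvature, and Theorem \ref{MainTHM2} forces $F$ to be a Riemannian metric of constant sectional curvature, which completes the argument. I expect the only real obstacle to be the first link: passing rigorously from the tensorial identity \eqref{IBCurve} to the scalar conclusion ${\bf S}=3cF$, and in particular ruling out a residual $1$-form so that the S-curvature is genuinely isotropic (rather than merely weakly isotropic) as required by \cite{XDg}. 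Once that is secured, the homogeneity step and Theorem \ref{MainTHM2} finish everything cleanly.
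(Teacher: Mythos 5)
Your proposal is correct and follows essentially the same route as the paper: the paper's proof likewise passes from the isotropic Berwald condition to isotropic S-curvature ${\bf S}=3cF$ (citing Tayebi--Rafie Rad \cite{TR} for this step, which you sketch via the trace argument and correctly flag as the standard fact), then invokes the Xu--Deng theorem on homogeneous metrics of isotropic S-curvature to get ${\bf S}=0$, and concludes by Theorem \ref{MainTHM2}. The only cosmetic difference is that you cite \cite{XDg} for the Xu--Deng step, which is in fact the reference used consistently elsewhere in the paper, whereas the proof of Corollary \ref{cor4} cites \cite{XDn}.
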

%---------------------------------------------------------------------------------------------------------------------------------------
\section{Preliminaries}\label{sectionP}
%---------------------------------------------------------------------------------------------------------------------------------------
Let $M$ be an $n$-dimensional $C^{\infty}$ manifold, $TM=\bigcup_{x \in M}T_{x}M$ the tangent space and $TM_0:=TM-\{0\}$ the slit tangent space of $M$. Let $(M, F)$ be a Finsler manifold. The  following quadratic form $\textbf{g}_y:T_xM \times T_xM\rightarrow \mathbb{R}$ is called fundamental tensor
\[
\textbf{g}_{y}(u,v):={1 \over 2}\frac{\partial^2}{\partial s\partial t} \Big[  F^2 (y+su+tv)\Big]_{s=t=0}, \ \
u,v\in T_xM.
\]

Let  $x\in M$ and $F_x:=F|_{T_xM}$.  To measure the non-Euclidean feature of $F_x$, one can define ${\bf C}_y:T_xM\times T_xM\times T_xM\rightarrow \mathbb{R}$ by
\[
{\bf C}_{y}(u,v,w):={1 \over 2} \frac{d}{dt}\left[\textbf{g}_{y+tw}(u,v)
\right]_{t=0}, \ \ u,v,w\in T_xM.
\]
The family ${\bf C}:=\{{\bf C}_y\}_{y\in TM_0}$  is called the Cartan torsion.

\bigskip

For a   Finsler manifold $(M, F)$,  its induced spray on $TM$ is denoted by ${\bf G}={\bf G}(x, y)$   which in a standard coordinate $(x^i,y^i)$ for $TM_0$ is given by
${\bf G}=y^i {{\partial}/ {\partial x^i}}-2G^i(x,y){{\partial}/ {\partial y^i}}$, where
\[
G^i:=\frac{1}{4}g^{il}\Big[\frac{\partial^2F^2}{\partial x^k \partial y^l}y^k-\frac{\partial F^2}{\partial x^l}\Big].
\]
For a Finsler metric $F$ on an $n$-dimensional manifold $M$, the Busemann-Hausdorff volume form $dV_F = \sigma_F(x) dx^1 \cdots
dx^n$ is defined by
\[
\sigma_F(x) := {{\rm Vol} \big(\Bbb B^n(1)\big)
\over {\rm Vol} \Big \{ (y^i)\in R^n \ \big | \ F \big ( y^i \frac{\partial}{\partial x^i}|_x \big ) < 1 \Big \} } .\label{dV}
\]
Let $G^i$ denote the geodesic coefficients of $F$ in the same
local coordinate system. Then for ${\bf y}=y^i{\partial/ \partial x^i}|_x\in T_xM$, the  S-curvature is defined by
\begin{equation}
 {\bf S}({\bf y}) := {\partial G^i\over \partial y^i}(x,y) - y^i {\partial \over \partial x^i}
\Big [ \ln \sigma_F (x)\Big ].\label{S}
\end{equation}

For a  vector $y \in T_xM_{0}$, the Berwald curvature  ${\bf B}_y:T_xM\times T_xM \times T_xM\rightarrow T_xM$ is defined by
${\bf B}_y(u, v, w):=B^i_{\ jkl}(y)u^jv^kw^l{{\partial }/ {\partial x^i}}|_x$, where
\[
B^i_{\ jkl}:={{\partial^3 G^i} \over {\partial y^j \partial y^k \partial y^l}}.
\]
 $F$ is called a Berwald metric if $\bf{B}=0$. Every Berwald metric satisfies ${\bf S}=0$ (see \cite{TR}).

 For $y\in T_xM$, define the Landsberg curvature ${\bf L}_y:T_xM\times T_xM \times T_xM\rightarrow \mathbb{R}$  by
\[
{\bf L}_y(u, v,w):=-\frac{1}{2}{\bf g}_y\big({\bf B}_y(u, v, w), y\big).
\]
A Finsler metric $F$ is called a Landsberg  metric if ${\bf L}=0$.

Taking a trace of Berwald curvature ${\bf B}$ give us  the mean of Berwald curvature ${\bf E}$ which is defined   by  ${\bf E}_y:T_xM\times T_xM \rightarrow \mathbb{R}$, where
\be
{\bf E}_y (u, v) := {1\over 2} \sum_{i=1}^n g^{ij}(y) {\bf g}_y \big ({\bf B}_y (u, v, \partial_i ) , \partial_j \big).
\ee
where $\{\partial_i\}$ is a basis for $T_xM$ at $x\in M$. In local coordinates,   ${\bf E}_y(u, v):=E_{ij}(y)u^iv^j$, where
\[
E_{ij}:=\frac{1}{2}B^m_{\ mij}.
\]
\newpage

Taking a horizontal derivation of the mean of Berwald curvature ${\bf E}$ along Finslerian geodesics give us the $H$-curvature ${\bf H}={\bf H}(x, y)$ which is defined by  ${\bf H}_y=H_{ij}dx^i\otimes dx^j$, where
\[
H_{ij}:= E_{ij|m}y^m.
\]
Here,  $``|"$ denotes the horizontal covariant differentiation with respect to the Berwald connection of $F$.

For a non-zero vector $y \in T_xM_0$, one can define   ${\bf D}_y:T_xM\times T_xM \times T_xM\rightarrow T_xM$ by  ${\bf D}_y(u,v,w):=D^i_{\ jkl}(y)u^iv^jw^k{\partial}/{\partial x^i}|_{x}$, where
\be
D^i_{\ jkl}:=\frac{\partial^3}{\partial y^j\partial y^k\partial y^l}\Bigg[G^i-\frac{2}{n+1}\frac{\partial G^m}{\partial y^m} y^i\Bigg].\label{Douglas1}
\ee
$\bf D$ is called the Douglas curvature.  $F$ is called a Douglas metric if $\bf{D}=0$.

\bigskip
For a non-zero vector $y \in T_xM_{0}$, the Riemann curvature is a family of linear
transformation $\textbf{R}_y: T_xM \rightarrow T_xM$  which is defined by
$\textbf{R}_y(u):=R^i_{k}(y)u^k {\partial / {\partial x^i}}$, where
\be
R^i_{k}(y)=2{\partial G^i \over {\partial x^k}}-{\partial^2 G^i \over
{{\partial x^j}{\partial y^k}}}y^j+2G^j{\partial^2 G^i \over
{{\partial y^j}{\partial y^k}}}-{\partial G^i \over {\partial
y^j}}{\partial G^j \over {\partial y^k}}.\label{Riemannx}
\ee
The family $\textbf{R}:=\{\textbf{R}_y\}_{y\in TM_0}$ is called the Riemann curvature.

For a flag $P:={\rm span}\{y, u\} \subset T_xM$ with the flagpole $y$, the flag curvature ${\bf
K}={\bf K}(P, y)$ is defined by
\be
{\bf K}(x, y, P):= {{\bf g}_y \big(u, {\bf R}_y(u)\big) \over {\bf g}_y(y, y) {\bf g}_y(u,u)
-{\bf g}_y(y, u)^2 }.\label{FC0}
\ee
The flag curvature ${\bf K}(x, y, P)$ is a function of tangent planes $P={\rm span}\{ y, v\}\subset T_xM$.   A Finsler metric $F$ is of scalar flag curvature if ${\bf K}(x, y, P)={\bf K}(x, y)$ is independent of $ P$. Also, $F$ is called of isotropic  and  constant flag curvature if ${\bf K}={\bf K}(x)$ and  ${\bf K}=constant$, respectively.

\bigskip

Throughout  this paper, we use  the Berwald connection on Finsler manifolds. The pullback bundle $\pi^*TM$ admits a unique linear connection, called the Berwald connection. Let $(M, F)$ be an $n$-dimensional Finsler manifold. Let $\{e_j\}$ be a local
frame for $\pi^*TM$, $\{\omega^i, \omega^{n+i}\}$ be the corresponding local coframe for
$T*(TM_0)$ and $\{\omega^i_j\}$ be the set of local Berwald  connection forms with respect
to $\{e_j\}$. Then the connection forms are characterized by the structure equations as follows
\begin{itemize}
  \item Torsion freeness:
  \begin{eqnarray}
d\omega^i = \omega^j \wedge \omega^i_{\ j}.
\end{eqnarray}
  \item Almost metric compatibility:
  \begin{eqnarray}
dg_{ij}- g_{kj}\omega^k_{\ i} - g_{ik}\omega^k_{\ j} = -2L_{ijk}\omega^k + 2C_{ijk}\omega^{n+k},
\end{eqnarray}
where $\omega^i := dx^i$ and $\omega^{n+k}:= dy^k + y^j\omega^k_{\ j}$.
\end{itemize}
The horizontal and vertical covariant derivations with respect to the Berwald connection respectively are  denoted by ``$|$'' and  ``, ". For more details, one can see \cite{ShDiff}.

%-------------------------------------------------------------------------------------------------------------------------------------------------------------------------
\section{Proof of  Theorems \ref{MainTHM} and \ref{MainTHM2}}
%-------------------------------------------------------------------------------------------------------------------------------------------------------------------------
For an $(\alpha, \beta)$-metric, let us define $b_{i;j}$ by $b_{i;j}\theta^j:=db_i-b_j\theta^j_i$, where
$\theta^i:=dx^i$ and $\theta^j_i:=\gamma^j_{ik}dx^k$ denote
the Levi-Civita connection form of $\alpha$. Let
\begin{eqnarray}
r_{ij}:=\frac{1}{2}(b_{i;j}+b_{j;i}), \ \ \ s_{ij}:=\frac{1}{2}(b_{i;j}-b_{j;i}),\ \ \ r_{i0}: = r_{ij}y^j, \  \ r_{00}:=r_{ij}y^iy^j,\ \ r_j := b^i r_{ij},\label{rs1} \\
s_{i0}:= s_{ij}y^j, \  \ \ s_j:=b^i s_{ij}, \ \ \ s^i_{\ j}=a^{im}s_{mj}, \ \ \ s^i_{\ 0}=s^i_{\ j}y^j,\ \ \  r_0:= r_j y^j,\ \  \ \  s_0 := s_j y^j.\label{rs2}
\end{eqnarray}
Put
\begin{eqnarray}
 Q\!\!\!\!&:=&\!\!\!\! \frac{\phi'}{\phi- s\phi},\ \ \ \  \ \Delta:=1+sQ+(b^2-s^2)Q',\ \ \ \ \ \Theta:=\frac{Q-sQ'}{2\Delta},\label{del}\\
\Phi\!\!\!\!&:=&\!\!\!\!  - (Q -s Q')(n\Delta + 1 + sQ) -(b^2-s^2) (1+sQ) Q'',\label{phi}\\
\nonumber\Psi\!\!\!\!&:=&\!\!\!\! \frac{\phi''}{2\big[(\phi -s\phi')+(b^2-s^2)\phi''\big]},
\end{eqnarray}
where  $b:=\|\beta\|_\alpha$. Let $G^i=G^i(x,y)$ and $\bar{G}^i_{\alpha}=\bar{G}^i_{\alpha}(x,y)$ denote the
coefficients  of $F$ and  $\alpha$, respectively, in the same coordinate system. By definition, we have
\begin{eqnarray}
G^i=G^i_{\alpha}+\alpha Q s^i_0+\alpha^{-1}(r_{00}-2Q\alpha s_0)(\Theta y^i+\alpha\Psi b^i).\label{G1}
\end{eqnarray}
Clearly, if $\beta $ is parallel with respect to $\alpha$, that is $r_{ij}=s_{ij}=0$, then $G^i ={G}^i_{\alpha}=\gamma^i_{jk}(x)y^jy^k$ are quadratic in $y$. In this case, $F$ reduces to a Berwald metric.
\bigskip

For an $(\alpha, \beta)$-metric $F=\alpha\phi(s)$, $s=\beta/\alpha$,  on an $n$-dimensional manifold $M$, the $S$-curvature is given by
\be
{\bf S}=\Big[2\Psi-\frac{f'(b)}{bf(b)}\Big](r_0+s_0)-\frac{\Phi}{2\alpha\Delta^2}(r_{00}-2\alpha Qs_0),\label{SC}
\ee
where
\begin{eqnarray*}
f(b):=\frac{\int^\pi_0\sin^{n-2}t\ T(b\cos t)dt}{\int^\pi_0\sin^{n-2}tdt},\ \  \ \ T(s):=\phi(\phi-s\phi')^{n-2}\big[(\phi-s\phi')+(b^2-s^2)\phi'' \big].
\end{eqnarray*}
For more details, see \cite{ChSh3}.

\bigskip
To prove Theorem \ref{MainTHM}, we need the following key lemma.
\begin{lem}{\rm (\cite{BM}\cite{TB})}\label{M}
Let $F= \alpha\phi(s)$, $s = \beta/\alpha$, be an  $(\alpha, \beta)$-metric on a manifold $M$. Then  $F$ is a generalized Berwald metric if and only if $\beta$ has constant length with respect to $\alpha$.
\end{lem}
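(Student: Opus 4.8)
The plan is to prove both implications by reformulating the generalized Berwald condition as the existence of a linear connection $\nabla$ on $M$ whose parallel transports $P_\gamma : T_xM \to T_yM$ (along any curve $\gamma$ from $x$ to $y$) are linear isometries for the Finsler norms, that is $F(P_\gamma v)=F(v)$ for all $v\in T_xM$. The case in which $F$ happens to be Riemannian is immediate (the Levi-Civita connection of $\alpha$ is torsion-free and $F$-preserving, and $\beta$ is irrelevant), so throughout I assume $F$ is genuinely non-Riemannian. The key geometric input is that the indicatrix $\{v:F(v)=1\}$ of a non-Riemannian $(\alpha,\beta)$-metric is a body of revolution about the axis $\beta^\sharp$ with respect to the Euclidean structure $\alpha$, and that the pair $(\alpha,\beta)$ is uniquely recoverable from $F$. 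Consequently a linear map is an $F$-isometry between two tangent spaces if and only if it is an $\alpha$-isometry carrying $\beta$ to $\beta$; this rigidity is the hinge of the necessity argument, whereas the ``if'' part of it is trivial since $F=\alpha\phi(\beta/\alpha)$ is built solely from $\alpha$ and $\beta$.

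For necessity, suppose $F$ is a generalized Berwald metric with associated connection $\nabla$. By the rigidity above, every parallel transport $P_\gamma$ is simultaneously an $\alpha$-isometry and preserves $\beta$. Letting $\gamma$ range over all curves, this forces $\nabla\alpha=0$ (metric compatibility with $\alpha$) and $\nabla\beta=0$ (the $1$-form $\beta$ is $\nabla$-parallel). I then compute, since $b^2$ is a scalar,
\begin{eqnarray*}
\nabla_k(b^2)=\nabla_k(a^{ij}b_ib_j)=(\nabla_k a^{ij})b_ib_j+2a^{ij}b_i\nabla_k b_j=0,
\end{eqnarray*}
so $b=\|\beta\|_\alpha$ is constant on the connected manifold $M$.

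For sufficiency, assume $b=\|\beta\|_\alpha$ is a nonzero constant; I will exhibit an $\alpha$-metric-compatible connection annihilating $\beta$. Writing the sought connection as $\Gamma^l_{ki}=\gamma^l_{ki}+C^l_{ki}$ with the Christoffel symbols $\gamma^l_{ki}$ of $\alpha$, compatibility with $\alpha$ is equivalent to $C_{lki}:=a_{lm}C^m_{ki}$ being antisymmetric in the outer pair $(l,i)$, while $\nabla_k b_i=0$ amounts to $b^lC_{lki}=b_{i;k}$. I propose the contorsion
\begin{eqnarray*}
C_{lki}=\frac{1}{b^2}\big(b_l\,b_{i;k}-b_i\,b_{l;k}\big),
\end{eqnarray*}
which is manifestly antisymmetric in $(l,i)$ and hence yields a metric connection. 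Contracting with $b^l$ gives $b^lC_{lki}=b_{i;k}-b^{-2}b_i(b^lb_{l;k})$, and the constant-length hypothesis is precisely the identity $b^lb_{l;k}=r_k+s_k=0$; therefore $b^lC_{lki}=b_{i;k}$ and $\nabla\beta=0$. Since $\nabla$ is $\alpha$-metric and kills $\beta$, its parallel transports are $\alpha$-isometries preserving $\beta$, hence $F$-isometries, so $F$ is a generalized Berwald metric.

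I expect the main obstacle to lie in the necessity direction, specifically in justifying the rigidity claim that an $F$-isometry must be an $\alpha$-isometry preserving $\beta$: this rests on the uniqueness of the $(\alpha,\beta)$-decomposition of a non-Riemannian Finsler norm and on the rotational symmetry of its indicatrix about $\beta^\sharp$, which is exactly where the non-Riemannian hypothesis enters. By contrast the sufficiency direction is a direct construction, the only computation being the contraction above, which is made to vanish by the constant-length condition $r_k+s_k=0$. Care must also be taken with the boundary case $b=0$, which forces $\beta=0$ and $F$ Riemannian and is again trivial.
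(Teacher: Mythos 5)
Your sufficiency direction is correct and complete. The contorsion $C_{lki}=b^{-2}\big(b_l\,b_{i;k}-b_i\,b_{l;k}\big)$ is antisymmetric in the pair $(l,i)$, so $\nabla=\nabla^{\alpha}+C$ is compatible with $\alpha$, and the contraction $b^lC_{lki}=b_{i;k}-b^{-2}b_i\,(r_k+s_k)$ reduces to $b_{i;k}$ precisely under the constant-length condition $r_k+s_k=0$; hence $\nabla\beta=0$, parallel transports preserve $\alpha$ and $\beta$, and therefore $F$. Note that this is a genuinely different route from the paper's: the paper does not construct anything, but proves the lemma by a citation chain (Tayebi--Barzegari \cite{TB} for $(\alpha,\beta)$-metrics with the sign property, Ivanov \cite{I} in dimension two, and the Bartelme{\ss}--Matveev theorem \cite{BM} that generalized Berwald is equivalent to monochromatic in all dimensions). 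Your construction in effect reproduces the compatible connection that underlies the cited results, and it is more informative than the paper's own treatment of this half.

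The necessity direction, however, has a genuine gap, and it sits exactly where you place the ``hinge'': the rigidity claim that every linear $F$-isometry between tangent spaces is an $\alpha$-isometry carrying $\beta$ to $\beta$. You assert this, but never prove it, and it is not a routine verification --- it is the entire mathematical content of this half of the lemma. As stated it is even false when $\phi$ is even, since an $\alpha$-isometry sending $\beta\mapsto-\beta$ is then an $F$-isometry (harmless for the constancy of $b$, but a warning sign). More seriously, the uniqueness of the $(\alpha,\beta)$-decomposition on which you rest the claim fails in general: the same Finsler norm can be rewritten with $\tilde\alpha=\sqrt{\alpha^2+c\beta^2}$, $\tilde\beta=\beta$ and a modified $\tilde\phi$, and this changes the length to $\|\tilde\beta\|^2_{\tilde\alpha}=b^2/(1+cb^2)$; so ``the indicatrix is a body of revolution about $\beta^\sharp$'' does not by itself pin down $\alpha$, $\beta$, or $b$ from the norm alone. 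What your argument actually needs is that, for the fixed function $\phi$, the model norms $N_b(y)=|y|\,\phi\big(b\,y^1/|y|\big)$ are pairwise non-isometric for distinct admissible values of $b$ (equivalently, that $b$ is an invariant of the normed-space isomorphism class). Establishing exactly this is why Tayebi--Barzegari impose the sign property in \cite{TB}, why Vincze assumes $\phi'(0)\neq0$ in \cite{Vin0}, and why the paper instead leans on the monochromatic theorems of \cite{I} and \cite{BM}. Without proving, or at least invoking, such a rigidity statement, your necessity argument --- and hence the claimed equivalence --- is incomplete.
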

\begin{proof}
In \cite{TB}, Tayebi-Barzegari showed that every $(\alpha, \beta)$-metric  $F=\alpha\phi(s)$, $s=\beta/\alpha$, with sign property is a generalized Berwald metric if and only if the dual vector field $\beta^\sharp$ is of constant Riemannian length.  In \cite{I}, Ivanov  proved that a two-dimensional Finsler metric is a generalized Berwald metric if and only if it is monochromatic, i.e., Finsler metrics such that each two tangent spaces are isomorphic as normed spaces.     In \cite{BM}, Bartelme${\ss}$-Matveev extended his result for $n$-dimensional Finsler metric. It follows that an $(\alpha, \beta)$-metric  is a generalized Berwald metric if and only if dual vector field $\beta^\sharp$ is of constant Riemannian length.
\end{proof}

\bigskip

In \cite{ChSh3}, Cheng-Shen  characterized $(\alpha,\beta)$-metrics with isotropic
S-curvature on a manifold $M$ of dimension $n\geq 3$. Soon, they found that their result holds for the class of  $(\alpha,\beta)$-metrics with  constant length one-forms, only. Here, we give a characterization of  the class of generalized Berwald  metrics with vanishing S-curvature.
\begin{lem}\label{LS}
Let $F= \alpha\phi(s)$, $s = \beta/\alpha$, be a  generalized Berwald  $(\alpha, \beta)$-metric on an $n$-dimensional manifold $M$. Then ${\bf S}=0$ if and only if $F$ is Riemannian or  $\beta$ is a Killing form with  constant length.
\end{lem}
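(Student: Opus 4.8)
The plan is to reduce the general S-curvature formula \eqref{SC} using the generalized Berwald hypothesis, and then to run a case analysis on the scalar $\Phi$ of \eqref{phi}. First I would invoke Lemma \ref{M}: $F$ is a generalized Berwald metric precisely when $b=\|\beta\|_\alpha$ is constant. Differentiating $b^2=a^{ij}b_ib_j$ with respect to the Levi-Civita connection of $\alpha$ and using $b_{i;j}=r_{ij}+s_{ij}$ from \eqref{rs1} gives $(b^2)_{;k}=2(r_k+s_k)$; hence constancy of $b$ forces $r_i+s_i=0$ and therefore $r_0+s_0=0$. Substituting $r_0+s_0=0$ into \eqref{SC} annihilates the first bracket and leaves the clean identity
\[
{\bf S}=-\frac{\Phi}{2\alpha\Delta^2}\big(r_{00}-2\alpha Q s_0\big).
\]
Since $\Delta\neq0$ on the domain of definition, the condition ${\bf S}=0$ becomes equivalent to $\Phi\,(r_{00}-2\alpha Q s_0)=0$ for all $y$.

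The two sufficiency directions are then immediate. If $F$ is Riemannian it is a Berwald metric, so ${\bf S}=0$ (equivalently, $Q$ is linear in $s$ and $\Phi\equiv0$). If $\beta$ is a Killing form of constant length, then $r_{ij}=0$ gives $r_i=0$ and $r_{00}=0$, while constant length gives $s_i=-r_i=0$ and hence $s_0=0$; plugging these into the reduced identity yields ${\bf S}=0$.

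For necessity I would split on whether $\Phi$ vanishes identically. Here I rely on the Cheng--Shen characterization (in the notation of \eqref{phi}, \eqref{del}) that $\Phi\equiv0$ if and only if $Q=\phi'/(\phi-s\phi')$ is linear in $s$, i.e. $\phi=\sqrt{c_1+c_2 s^2}$ and $F$ is Riemannian. Thus if $\Phi\equiv0$ we land in the Riemannian alternative and are done. Otherwise $\Phi\not\equiv0$, so $\Phi$ is nonzero on an open cone of directions; on that cone $r_{00}-2\alpha Q s_0=0$, and by continuity this extends to the global identity $r_{00}=2\alpha Q s_0$.

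The heart of the argument --- and the step I expect to be the main obstacle --- is to deduce from $r_{00}=2\alpha Q s_0$ that $\beta$ is Killing. The idea is to separate the rational and irrational parts of this identity in $y$, treating $\alpha=\sqrt{a_{ij}y^iy^j}$ as the irrational generator while $r_{00}$ and $s_0$ are polynomials. Restricting to the hyperplane $\{s_0=0\}$ forces $r_{00}$ to vanish there, so (if $s_0\not\equiv0$) the linear form $s_0$ divides the quadratic $r_{00}$, say $r_{00}=s_0\,h$ with $h$ linear; substitution then yields $h=2\alpha Q$ identically. Because $Q=Q(\beta/\alpha)$ depends on $y$ only through $s$, the homogeneous degree-zero function $h/(2\alpha)$ must be a function of $s$ alone, which forces $h$ proportional to $\beta$ and hence $Q$ linear in $s$ --- i.e. $F$ Riemannian, contradicting $\Phi\not\equiv0$. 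Therefore $s_0\equiv0$, so $s_i=0$; feeding this back gives $r_{00}=0$, hence $r_{ij}=0$, so $\beta$ is a Killing form, and it already has constant length by Lemma \ref{M}. This exhausts the alternatives and proves the equivalence. The delicate points to watch are the passage from ``$\Phi\neq0$ on an open cone'' to the global identity $r_{00}=2\alpha Q s_0$, and the clean justification that $h/(2\alpha)$ being a function of $s$ forces $h$ parallel to $\beta$.
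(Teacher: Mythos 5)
Your proposal is correct, and up to the key computation it follows the same skeleton as the paper: Lemma \ref{M} gives $b=\|\beta\|_\alpha$ constant, hence $r_i+s_i=0$; \eqref{SC} collapses to \eqref{SS}; and ${\bf S}=0$ splits into the alternatives $\Phi\equiv 0$ (Riemannian, by the same citation) and \eqref{condition 1l}. The genuine difference is how you resolve $r_{00}=2\alpha Qs_0$. The paper works at a fixed point in an orthonormal frame with $\beta=by^1$, rewrites the identity in the variables $(s,u^A)$, separates it into the parts rational and irrational in $\bar\alpha$ to obtain \eqref{51} and \eqref{52}, deduces $r_{11}=0$ and $\bar r_{00}=0$ by differentiating \eqref{54} in $y^1$, and then splits on $\bar r_{10}$: if $\bar r_{10}=0$ it lands in the Killing case, while if $\bar r_{10}\neq 0$ a further manipulation of \eqref{52} forces $Q=ks$, i.e.\ the Riemannian case. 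You replace all of this with a divisibility argument: $r_{00}$ vanishes on the hyperplane $\{s_0=0\}$, so $s_0$ divides the quadratic $r_{00}$; the linear quotient $h$ must equal $2\alpha Q$; and since $h/(2\alpha)$ is a linear form over $\alpha$ depending on $y$ only through $s$, it must be proportional to $s=\beta/\alpha$, forcing $Q$ linear and $F$ Riemannian, contradicting $\Phi\not\equiv 0$; hence $s_0\equiv 0$ and then $r_{ij}=0$. Your route is shorter, coordinate-light, avoids the sub-case analysis, and makes the Killing-versus-Riemannian dichotomy transparent (non-Riemannian forces the Killing branch at every point, since $\phi$ is a single function independent of $x$); the paper's route is the standard Cheng--Shen polar-coordinate technique, heavier but generating the explicit intermediate relations \eqref{47}--\eqref{52}. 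One caveat you share with the paper: passing from the pointwise alternative ($\Phi(s(y))=0$ or $r_{00}(y)=2\alpha Qs_0(y)$ at each $y$) to the global dichotomy requires that the zero set of $\Phi$ as a function of $s$ have empty interior; your density-plus-continuity remark is the honest version of this step, which the paper passes over silently.
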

\begin{proof}
Let $b:=\|\beta\|_\alpha=\sqrt{a^{mj}b_jb_m}=\sqrt{b_mb^m}$. Then, the following holds
\be
\frac{\partial b}{\partial x^i} =\frac{1}{b} b^mb_{m|i}= \frac{1}{b}(r_i + s_i).\label{bc}
\ee
By Lemma \ref{M}, we have $b = constant$. Then, by \eqref{bc} we obtain $r_i+s_i=0$. In this case,  \eqref{SC} reduces to following
\be
{\bf S}=-\frac{\Phi}{2\alpha\Delta^2}(r_{00}-2\alpha Qs_0),\label{SS}
\ee
By \eqref{SS}, ${\bf S}=0$ if and only if  $\Phi=0$ or $\beta$ satisfies
\begin{equation} \label{condition 1l}
r_{00}=2\alpha Q s_0.
\end{equation}
In the case of $\Phi=0$, $F$ reduces to a Riemannian metric (see Proposition 2.2 in \cite{CWW}).  Now, let \eqref{condition 1l} holds.  We are going to simplify (\ref{condition 1l}). For this aim,  one can   change the $y$-coordinates $(y^i)$, $i=1, \cdots, n$, at a point to  the polar coordinates $ (s,u^A)$, where  $A=2, \cdots, n$ (see \cite{ChSh3}). For an arbitrary and fix point $x\in M$, let us take an orthonormal basis $e_i$ at $x$ such that the Riemannian metric is written as $\alpha=\sqrt{\sum_{i=1}^n (y^i)^2}$ and  its related one-form is given by $\beta=by^1$, where $b:=||\beta||_\alpha$. Let us fix an arbitrary number $s$ such that $|s| < b$. Define
\[
{\bar \alpha}=\sqrt{\sum_{A=2}^n (y^A)^2}.
\]
Then, by $\beta = s\alpha$ we get
\begin{equation}
y^1=\frac{s}{\sqrt{b^2-s^2}} {\bar \alpha}, \ \ \  \ y^A=u^A.\label{44}
\end{equation}
Also, we have
\begin{equation}
\alpha =\frac{b}{\sqrt{b^2-s^2} }\bar{\alpha}, \ \ \ \ \
\beta = \frac{bs}{\sqrt{b^2-s^2} } \bar{\alpha}. \label{45}
\end{equation}
Let us put
\begin{eqnarray*}
\bar{r}_{10}:= \sum_{A=2}^nr_{1A} y^{A}, \ \  \bar{s}_{10}:= \sum_{A=2}^ns_{1A} y^{A},\ \ \bar{r}_{00} := \sum_{A,B=2}^n r_{AB}y^{A} y^{B},\ \
\bar{r}_0 := \sum_{A=2}^n r_{A} y^{A}, \ \ \bar{s}_{0}:=\sum_{A=2}^n s_{A} y^{A}.
\end{eqnarray*}
Then we get the following useful relations
\begin{eqnarray}
&& r_1 = b r_{11}, \ \ \ \ r_{A} = b r_{1A}, \ \ \ s_1 = 0, \ \ \ \ s_{A} = b s_{1A},\label{47}\\
&&r_{00}=\frac{s^{2}}{b^{2}-s^{2}}\bar{\alpha}^{2}r_{11}+\frac{2s}{\sqrt{b^{2}-s^{2}}}\bar{\alpha}\bar{r}_{10}+\bar{r}_{00},\label{48}\\
&&r_{10}=\frac{s}{\sqrt{b^{2}-s^{2}}}\bar{\alpha}r_{11}+\bar{r}_{10}, \ \ \ \ \ \ \ s_{0}=\bar{s}_{0}=b\bar{s}_{10}. \label{49}
\end{eqnarray}
Using (\ref{45})-(\ref{49}),  the equation (\ref{condition 1l})  can be written as follows
\begin{eqnarray}
\bar{r}_{00}+\frac{s^{2}}{b^{2}-s^{2}}\bar{\alpha}^{2}\ r_{11}=\frac{2}{\sqrt{b^2-s^2} }\left[b^2 Q \bar{s}_{10}-s\bar{r}_{10}\right]\bar{\alpha}.\label{50}
\end{eqnarray}
By (\ref{50}), we get two following relations
\begin{eqnarray}
&& \bar{r}_{00}+  \frac{s^{2}}{b^{2}-s^{2}}\bar{\alpha}^{2}r_{11}=0, \label{51}\\
&& b^2Q \bar{s}_{10}-s\bar{r}_{10}=0.\label{52}
\end{eqnarray}
On the other hand, (\ref{45}) implies that
\begin{eqnarray}
\frac{s^{2}}{b^{2}-s^{2}}\bar{\alpha}^{2}-\frac{1}{b^2}\beta^2=0. \label{53}
\end{eqnarray}
By (\ref{51}) and (\ref{53}), we get
\begin{eqnarray}
b^2\bar{r}_{00}+\beta^2r_{11}=0. \label{54}
\end{eqnarray}
The following hold
\[
\frac{\partial \bar{r}_{00}}{\partial y^1}=0, \ \ \ \ \ \  \frac{\partial \beta}{\partial y^1}=b.
\]
Then by differentiating (\ref{54}) with respect to $y^1$ we have
${\beta}/{b}r_{11}=0$. Thus $r_{11}=0$ and by putting it in (\ref{54}), we get $\bar{r}_{00}=0$. Putting these relations  in (\ref{48}) and (\ref{49})  imply that
\begin{eqnarray}
&&r_{00}=\frac{2s\bar{\alpha}}{\sqrt{b^{2}-s^{2}}}\bar{r}_{10}=\frac{2\beta}{b} \bar{r}_{10},\label{58}\\
&&r_{10}=\bar{r}_{10}. \label{59}
\end{eqnarray}
Now, by considering \eqref{58} and \eqref{59},  we divide the problem into two cases: (a) $\bar{r}_{10} = 0$ and (b) $\bar{r}_{10}\neq 0$.\\\\
{\bf Case (a):  $\bar{r}_{10} = 0$}. In this case, by  (\ref{58})  we get $r_{ij}=0$. Putting it  in  (\ref{condition 1l})   implies that $s_i=0$. In this case, $\beta$ reduces to a Killing one-form of constant length with respect to $\alpha$.\\\\
{\bf Case (b):  $\bar{r}_{10} \neq 0$}. We have ${\partial \bar{r}_{10}}/{\partial y^1}=0$ and ${\partial \bar{s}_{10}}/{\partial y^1}=0$.  Thus, differentiating (\ref{52}) with respect to $y^1$ yields
\begin{eqnarray}
(s)_{y^1}\bar{r}_{10}=b^2 (Q)_{y^1} \bar{s}_{10}.\label{63}
\end{eqnarray}
Contracting (\ref{52}) with $(s)_{y^1}$  give us
\begin{eqnarray}
s(s)_{y^1}\bar{r}_{10}= b^2Q (s)_{y^1}\bar{s}_{10}.\label{64}
\end{eqnarray}
By (\ref{63})  and (\ref{64}), we get
\begin{eqnarray}
\Big[(s)_{y^1}  Q-s(Q)_{y^1}\Big]\bar{s}_{10}=0.\label{65}
\end{eqnarray}
According to  (\ref{65}), we get $\bar{s}_{10}=0$ or $Q(s)_{y^1}=s(Q)_{y^1}$. Let $\bar{s}_{10}=0$ holds. Then (\ref{52}) reduces to $s=0$, which is impossible.  Then, we have
\[
Q(s)_{y^1}=s(Q)_{y^1},
\]
which is equal to
\begin{eqnarray}
\frac{(Q)_{y^1}}{Q}=\frac{(s)_{y^1}}{s}.\label{70}
\end{eqnarray}
Using $s_{y^1}\neq 0$ and $(Q)_{y^1}=s_{y^1}(Q)_s$,  then (\ref{70}) give us
\[
\frac{(Q)_s}{Q}=\frac{1}{s}
\]
which yields
\[
\ln(Q)-\ln(s)=c,
\]
where $c$ is a real constant. Thus $Q=ks$, where $k$ is a non-zero real constant. In this case, we get $\phi=\sqrt{1+ks^2}$ which shows that $F$ is a Riemannian metric. This completes the proof.
\end{proof}

\bigskip
Here, we solve an ODE which will appear in the proof of Theorem \ref{MainTHM}.
\begin{lem}\label{alaki}
Let $F= \alpha\phi(s)$, $s = \beta/\alpha$, be an  $(\alpha, \beta)$-metric on a manifold $M$. Suppose that $\phi$ satisfies following
\begin{eqnarray}
\alpha \Theta_2+2\Lambda_1\Theta_1+\Lambda_2Q=0,\label{eq}
 \end{eqnarray}
where
\begin{eqnarray*}
\Lambda_1: = b^i\alpha_{y^i}, \ \ \ \Lambda_2:=b^ib^j\alpha_{y^iy^j}, \ \ \ \ \Theta_1: = b^iQ_{y^i}, \  \ \Theta_2:=b^ib^jQ_{y^iy^j}.
\end{eqnarray*}
Then $F$ is a singular Finsler metric given by
\begin{eqnarray}
\phi=c\exp\Bigg[\int_0^s{\frac{k_1t+k_2\sqrt{b^2-t^2}}{1+t\big(k_1t+k_2\sqrt{b^2-t^2}\big)}dt}\Bigg],\label{uni}
\end{eqnarray}
where $k_1$ and $k_2$ are real constants, and $c>0$ is a non-zero constant.
\end{lem}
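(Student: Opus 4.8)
The plan is to compute the four auxiliary quantities $\Lambda_1,\Lambda_2,\Theta_1,\Theta_2$ explicitly in terms of $s$, $b$, and derivatives of $Q$, substitute them into the hypothesis \eqref{eq}, and thereby convert the PDE-looking equation into a genuine second-order ODE for $Q=Q(s)$ which I can then integrate. First I would exploit the homogeneity structure of $\alpha=\sqrt{a_{ij}y^iy^j}$ and $s=\beta/\alpha$. Since $\alpha_{y^i}=a_{ij}y^j/\alpha$, contracting with $b^i$ gives $\Lambda_1=b^i\alpha_{y^i}=(b_iy^i)/\alpha=\beta/\alpha=s$. For $\Lambda_2$, differentiating once more yields $\alpha_{y^iy^j}=(a_{ij}-\alpha_{y^i}\alpha_{y^j})/\alpha$, and contracting twice with $b^i,b^j$ produces $\Lambda_2=(b^2-s^2)/\alpha$, using $b^2=a^{ij}b_ib_j$ together with $b^i\alpha_{y^i}=s$. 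The key observation is that $Q$ is a function of the single scalar $s=\beta/\alpha$, so $Q_{y^i}=Q'(s)\,s_{y^i}$ where $s_{y^i}=(b_i\alpha-\beta\alpha_{y^i})/\alpha^2=(b_i-s\,\alpha_{y^i})/\alpha$.

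Contracting $s_{y^i}$ with $b^i$ gives $b^is_{y^i}=(b^2-s\cdot s)/\alpha=(b^2-s^2)/\alpha$, whence $\Theta_1=b^iQ_{y^i}=Q'(s)(b^2-s^2)/\alpha$. For $\Theta_2$ I differentiate $Q_{y^j}=Q'\,s_{y^j}$ again, obtaining $Q_{y^iy^j}=Q''\,s_{y^i}s_{y^j}+Q'\,s_{y^iy^j}$, and then contract twice with $b^i,b^j$; the first term contributes $Q''\,(b^is_{y^i})^2=Q''(b^2-s^2)^2/\alpha^2$, while the second requires $b^ib^js_{y^iy^j}$, which I compute from the explicit form of $s_{y^i}$ and the expressions already found for $\Lambda_1,\Lambda_2$. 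A direct calculation (differentiating $s_{y^i}=(b_i-s\alpha_{y^i})/\alpha$ and contracting) should give $b^ib^js_{y^iy^j}=-2s(b^2-s^2)/\alpha^2$ after collecting terms. Thus $\Theta_2=\alpha^{-2}\big[Q''(b^2-s^2)^2-2sQ'(b^2-s^2)\big]$. Substituting all four quantities into \eqref{eq} and multiplying through by $\alpha$ clears every explicit $\alpha$, leaving the ODE
\begin{eqnarray*}
(b^2-s^2)^2Q''-2s(b^2-s^2)Q'+2s\cdot(b^2-s^2)Q'+(b^2-s^2)Q=0,
\end{eqnarray*}
which after cancellation collapses to $(b^2-s^2)Q''+Q=0$, i.e. a clean linear ODE whose coefficients depend only on $s$.

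With the ODE in hand, the remaining task is to solve it and then unwind the substitution $Q=\phi'/(\phi-s\phi')$ to recover $\phi$. The natural change of variable is $s=b\sin\theta$, which turns $b^2-s^2=b^2\cos^2\theta$ and should reduce $(b^2-s^2)Q''+Q=0$ to a constant-coefficient equation in $\theta$; its general solution is a combination of $\cos\theta$ and $\sin\theta$, i.e. $Q$ is a linear combination of $s$ and $\sqrt{b^2-s^2}$, say $Q=k_1s+k_2\sqrt{b^2-s^2}$. Finally I recover $\phi$ by solving the first-order equation $\phi'/(\phi-s\phi')=Q$ for $\phi$; rewriting it as $\phi'/\phi=Q/(1+sQ)$ and integrating gives exactly the claimed formula \eqref{uni}, with the multiplicative constant $c>0$ fixed by normalization at $s=0$. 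The step I expect to be the main obstacle is the computation of $b^ib^js_{y^iy^j}$ and the careful bookkeeping in $\Theta_2$: one must track the second-derivative tensor $s_{y^iy^j}$ and confirm that all $\alpha$-dependence cancels precisely, since any surviving $\alpha$ would signal an algebra error rather than a true ODE. Verifying that the resulting $\phi$ is genuinely singular (so that $F$ is an almost regular, not regular, metric) is then immediate from the form of $Q$ at $s=\pm b$.
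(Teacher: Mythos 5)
Your overall strategy is exactly the paper's: compute $\Lambda_1,\Lambda_2,\Theta_1,\Theta_2$ explicitly, reduce \eqref{eq} to an ODE for $Q(s)$, solve it, and then integrate $\phi'/\phi=Q/(1+sQ)$ to obtain \eqref{uni}. Your values $\Lambda_1=s$, $\Lambda_2=(b^2-s^2)/\alpha$, $\Theta_1=Q'(b^2-s^2)/\alpha$ agree with the paper, and your final integration step is correct. However, the step you yourself flagged as the main obstacle is precisely where your computation fails. Differentiating $s_{y^i}=(b_i-s\alpha_{y^i})/\alpha$ gives
\begin{equation*}
s_{y^iy^j}=-\frac{1}{\alpha}\Big[s_{y^i}\alpha_{y^j}+s_{y^j}\alpha_{y^i}+s\,\alpha_{y^iy^j}\Big],
\end{equation*}
and contracting with $b^ib^j$, using $b^i\alpha_{y^i}=s$, $b^is_{y^i}=(b^2-s^2)/\alpha$ and $b^ib^j\alpha_{y^iy^j}=(b^2-s^2)/\alpha$, each of the three terms in the bracket contributes $s(b^2-s^2)/\alpha$, so
\begin{equation*}
b^ib^js_{y^iy^j}=-\frac{3s(b^2-s^2)}{\alpha^2},
\end{equation*}
not $-2s(b^2-s^2)/\alpha^2$ as you assert. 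Consequently $\Theta_2=\alpha^{-2}(b^2-s^2)\big[(b^2-s^2)Q''-3sQ'\big]$ (this is the paper's \eqref{005x}), and substituting into \eqref{eq} leaves a surviving first-order term:
\begin{equation*}
(b^2-s^2)Q''-sQ'+Q=0,
\end{equation*}
rather than your $(b^2-s^2)Q''+Q=0$.

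This is not a cosmetic discrepancy, because your ODE does not admit the solutions you claim: plugging $Q=s$ into $(b^2-s^2)Q''+Q=0$ gives $s=0$, and plugging in $Q=\sqrt{b^2-s^2}$ gives $-s^2/\sqrt{b^2-s^2}=0$, so $Q=k_1s+k_2\sqrt{b^2-s^2}$ fails to solve it and \eqref{uni} does not follow. Your claim that $s=b\sin\theta$ turns your ODE into a constant-coefficient one is also false: one gets $Q_{\theta\theta}+\tan\theta\,Q_{\theta}+Q=0$. Ironically, the substitution idea is sound for the \emph{correct} ODE: there the term $-sQ'$ becomes exactly $-\tan\theta\,Q_{\theta}$, cancelling the unwanted term and yielding $Q_{\theta\theta}+Q=0$, whose solutions $b\sin\theta=s$ and $b\cos\theta=\sqrt{b^2-s^2}$ give $Q=k_1s+k_2\sqrt{b^2-s^2}$, as the paper obtains by exhibiting these two solutions directly. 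So your argument is repaired by fixing the single coefficient $-2s\mapsto-3s$ in $b^ib^js_{y^iy^j}$; everything downstream of the ODE (unwinding $Q=\phi'/(\phi-s\phi')$ to $\phi'/\phi=Q/(1+sQ)$ and integrating from $0$ to $s$) is correct and coincides with the paper's conclusion.
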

\begin{proof}
Let us put
\[
A_{jk}:=\alpha^2a_{jk}-y_jy_k.
\]
Then, the followings hold
\begin{eqnarray*}
&&\alpha_{y^i}=\frac{1}{\alpha}y_i,\\
&&\alpha_{y^jy^k}=\frac{1}{\alpha^3}A_{jk},\\
&&\alpha_{y^jy^ky^l}=-\frac{1}{\alpha^5}\Big[A_{jk}y_l+A_{jl}y_k+A_{lk}y_j\Big].
\end{eqnarray*}
Also, one can obtain the following
\begin{eqnarray}
&& \Lambda_1=s,\label{a1x}\\
&& \Lambda_2=\frac{1}{\alpha}(b^2-s^2),\label{a2x}\\
&&\Theta_1= \frac{1}{\alpha}(b^2-s^2)Q',\label{a3x}\\
&&\Theta_2=\frac{1}{\alpha^2}(b^2-s^2)\big[(b^2-s^2)Q''-3sQ'\big].\label{005x}
\end{eqnarray}
Suppose that  \eqref{eq} holds. Putting  (\ref{a1x})-(\ref{005x}) into (\ref{eq}) yield
\begin{eqnarray*}
Q''-\frac{s}{b^2-s^2} Q'+\frac{1}{b^2-s^2}Q=0,
\end{eqnarray*}
which implies that
\be
Q=k_1s+k_2\sqrt{b^2-s^2},\label{Q}
\ee
where $k_1$ and $k_2$ are real constants. Considering \eqref{del}, the equation \eqref{Q} is equal to following
\begin{eqnarray}
\Big[ 1+k_1s^2+k_2s\sqrt{b^2-s^2} \Big]\phi'=\Big[k_1s+k_2\sqrt{b^2-s^2}\Big]   \phi.\label{alaki4}
\end{eqnarray}
By \eqref{alaki4}, we get  \eqref{uni}. It is an almost regular $(\alpha, \beta)$-metric, namely, it is singular in two directions ${\bf y}=(\pm b, 0, 0)\in T_xM$ at any point $x$ (for more details, see \cite{ShC}).
\end{proof}

\bigskip

The function $\phi$ in \eqref{uni} is specifically has been seen for the first time in Asanov's paper \cite{As} as follows
\begin{eqnarray}
\phi=c\exp\Bigg[\int_0^s{\frac{k\sqrt{b^2-t^2}}{1+tk\sqrt{b^2-t^2}}dt}\Bigg].\label{unias}
\end{eqnarray}
Then, its more general form \eqref{uni} found by Shen in \cite{ShC}, where he looked for unicorn metrics, namely the Landsberg metrics which are not Berwaldian.  Shen  realized  that the function $\phi=\phi(b, s)$ in \eqref{uni} can be expressed in terms of elementary functions. See (7.2) in \cite{ShC}.

\bigskip

Here, we show that the Douglas curvature of Finsler surfaces satisfies a special relation. More precisely, we prove the following.
\begin{lem} \label{LD}
The Douglas curvature of any Finsler surface $(M, F)$ satisfies
\be
D^i_{\ jkl|s}y^s = \mathfrak{D}_{jkl}y^i\label{DD}
\ee
for some tensor $\mathfrak{D}=\mathfrak{D}_{ijk}dx^i\otimes dx^j\otimes dx^k$ which are homogeneous of degree -1 in $y$.
\end{lem}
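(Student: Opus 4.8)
The plan is to exploit the special algebraic structure that symmetric tensors acquire on a surface by passing to the Berwald frame. Fix $x\in M$ and $y\in T_xM_0$, let $\ell^i:=y^i/F$, and let $m^i$ be the ${\bf g}_y$-unit vector orthogonal to $\ell^i$; put $\ell_i:={\bf g}_y(\ell,\cdot)$ and $m_i:={\bf g}_y(m,\cdot)$, so that $\ell_iy^i=F$ and $m_iy^i=0$. First I would record the three standard properties of the Douglas curvature that hold on any Finsler manifold and follow from the definition \eqref{Douglas1} together with the homogeneity of $G^i$: the tensor $D^i_{\ jkl}$ is totally symmetric in $j,k,l$, it is trace-free, $D^m_{\ mkl}=0$, and it is orthogonal to the flagpole, $D^i_{\ jkl}y^j=0$.

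Next I would establish the key two-dimensional decomposition. Since each lower index lives in the $2$-plane spanned by $\ell_i$ and $m_i$, the relation $D^i_{\ jkl}y^j=0$ annihilates every $\ell$-component in the lower slots (because $\ell_jy^j=F\neq0$ while $m_jy^j=0$), forcing $D^i_{\ jkl}=T^i m_jm_km_l$ for a vector $T^i=T^i(x,y)$. Writing $T^i=a\ell^i+b\,m^i$ and imposing $D^m_{\ mkl}=0$ gives $b\,m_km_l=0$, hence $b=0$. Therefore
\[
D^i_{\ jkl}=\mathcal{D}\,\ell^i m_jm_km_l,
\]
with $\mathcal{D}=\mathcal{D}(x,y)$ a scalar, homogeneous of degree $-1$ in $y$ since $D^i_{\ jkl}$ has degree $-1$ and $\ell^i m_jm_km_l$ has degree $0$.

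The final step is horizontal differentiation along $y$. For the Berwald connection one has $y^i_{\ |s}=0$ and $F_{|s}=0$, whence $\ell^i_{\ |s}=0$ and in particular $\ell^i_{\ |s}y^s=0$. Applying the Leibniz rule to the decomposition and contracting with $y^s$ makes the term carrying $\ell^i_{\ |s}y^s$ vanish, leaving
\[
D^i_{\ jkl|s}y^s=(\mathcal{D}_{|s}y^s)\,\ell^i m_jm_km_l+\mathcal{D}\,\ell^i\,(m_jm_km_l)_{|s}y^s.
\]
Every surviving term carries the factor $\ell^i=y^i/F$, so setting
\[
\mathfrak{D}_{jkl}:=\frac{1}{F}\Big[(\mathcal{D}_{|s}y^s)\,m_jm_km_l+\mathcal{D}\,(m_jm_km_l)_{|s}y^s\Big]
\]
yields $D^i_{\ jkl|s}y^s=\mathfrak{D}_{jkl}y^i$. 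A degree count confirms that $\mathfrak{D}_{jkl}$ is homogeneous of degree $-1$: inside the bracket both summands have degree $0$, since horizontal differentiation preserves the homogeneity degree while each contraction with $y^s$ raises it by one, and the prefactor $1/F$ lowers it by one.

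The main obstacle is the decomposition step: one must argue cleanly that a symmetric, flagpole-orthogonal, trace-free $(1,3)$-tensor on a surface can only have the component along $\ell^i m_jm_km_l$, and deal with the mild sign ambiguity of $m$ (the combination $\mathcal{D}\,m_jm_km_l$, and hence $D^i_{\ jkl}$, is well defined even though $m$ and $\mathcal{D}$ individually flip sign under $m\mapsto-m$). Once the decomposition is available, the proportionality to $y^i$ is automatic: the horizontally parallel field $\ell^i$ is the sole carrier of the upper index, and differentiation along $y$ cannot generate any $m^i$-component.
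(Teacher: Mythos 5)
Your proof is correct, but it takes a genuinely different route from the paper's. The paper starts from the two-dimensional identity \eqref{GDW2} expressing $D^i_{\ jkl}$ through $B^i_{\ jkl}$ and $E_{jk}$, differentiates it horizontally along geodesics, and then invokes two nontrivial curvature inputs: the fact that every Finsler surface has scalar flag curvature, and Shen's identity \eqref{HW21} (formula (11.24) of \cite{ShDiff}) for $B^i_{\ jml|k}y^k$, whose trace gives $H_{jl}=-\frac{1}{2}\{y_l\textbf{K}_{j}+y_j\textbf{K}_{l}+F^2\textbf{K}_{jl}\}$; after substitution all terms carrying $\delta^i_j$ or $h^i_j$ cancel, leaving the explicit expression \eqref{dh} for $\mathfrak{D}_{jkl}$ in terms of $\textbf{K}$, its vertical derivatives, the Cartan torsion and $E_{jk,l|m}y^m$. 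You instead prove the stronger pointwise statement that on a surface the Douglas tensor itself is proportional to the flagpole, $D^i_{\ jkl}=\mathcal{D}\,\ell^i m_jm_km_l$, using only its algebraic properties (total symmetry, $D^m_{\ mkl}=0$, $D^i_{\ jkl}y^j=0$, all routine consequences of \eqref{Douglas1} and Euler's theorem on homogeneity) together with two-dimensionality via the Berwald frame; the lemma then follows at once from $y^i_{\ |s}=0$ and $F_{|s}=0$. Your argument is more elementary --- it needs no curvature identity and no appeal to scalar flag curvature --- and the stronger fact $D^i_{\ jkl}\propto y^i$ is exactly what is used downstream (Lemma \ref{B2} only needs $h^m_iD^i_{\ jkl|s}y^s=0$); it is also consistent with the paper's own formulas, since subtracting \eqref{GDW2} from \eqref{2L} and using $h^i_l-\delta^i_l=-F^{-2}y^iy_l$ reproduces your decomposition. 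What the paper's longer computation buys is the explicit formula for $\mathfrak{D}_{jkl}$ through the flag curvature, which exhibits the geometric content of the lemma; your $\mathfrak{D}_{jkl}$ is built from the frame-dependent scalar $\mathcal{D}$, although, as you note, the sign ambiguity of $m$ cancels, and one can make the definition manifestly invariant by setting $\mathfrak{D}_{jkl}:=F^{-2}y_iD^i_{\ jkl|s}y^s$.
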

\begin{proof}
By definition, the Douglas curvature of a two-dimensional Finsler metric is given by
\begin{equation}
D^i_{\ jkl}=B^i_{\ jkl}-\frac{2}{3}\Big\{E_{jk}\delta^i_{\ l}+E_{kl}\delta^i_{\ j}+E_{lj}\delta^i_{\ k}+E_{jk,l}y^i\Big\}.\label{GDW2}
\end{equation}
Taking a horizontal derivation of (\ref{GDW2}) along Finslerian geodesics  and using $y^i_{\ |s}=0$ give us the following
\begin{equation}
D^i_{\ jkl|m}y^m=B^i_{\ jkl|m}y^m-\frac{2}{3}\Big\{H_{jk}\delta^i_{\ l}+H_{kl}\delta^i_{\ j}+H_{lj}\delta^i_{\ k}+E_{jk,l|m}y^my^i\Big\}.\label{DW1}
\end{equation}
Every Finsler surface is of scalar flag curvature ${\bf K}={\bf K}(x, y)$. Then, by (11.24) in \cite{ShDiff} we have
\begin{eqnarray}
\nonumber B^i_{\ jml|k}y^k =2\textbf{K}C_{jlm}y^i- \frac{1}{3}\Big\{y_l\delta^i_m+y_m\delta^i_l-2g_{lm}y^i \Big \}\textbf{K}_{j} - \frac{1}{3}\Big\{y_j\delta^i_m+y_m\delta^i_j-2g_{jm}y^i \Big \}\textbf{K}_{l}\nonumber\\
- \frac{1}{3}\Big\{y_j\delta^i_l+y_l\delta^i_j-2g_{jl}y^i \Big \}\textbf{K}_{m} - \frac{1}{3}F^2\Big\{\textbf{K}_{jm}h^i_l+\textbf{K}_{jl}h^i_m+\textbf{K}_{lm}h^i_j\Big\},\label{HW21}
\end{eqnarray}
where $y_i=FF_{y^i}$, $\textbf{K}_{j}:=\textbf{K}_{y^j}$ and $\textbf{K}_{jk}:=\textbf{K}_{y^jy^k}$. Taking a trace of  (\ref{HW21}) yields
\be
H_{jl}=-\frac{1}{2}\Big\{y_l\textbf{K}_{j}+y_j\textbf{K}_{l}+F^2\textbf{K}_{jl}\Big\}.\label{HW22}
\ee
By putting (\ref{HW21}) and  (\ref{HW22}) in  (\ref{DW1}) we get
\begin{eqnarray}
\nonumber D^i_{\ jkl|m}y^m=\!\!\!&&\!\!\! \frac{1}{3}\Big\{6\textbf{K}C_{jkl} +2\big(g_{kl}\textbf{K}_{j}+g_{kj}\textbf{K}_{l}+g_{jl}\textbf{K}_{k}\big)+\big( y_{j}\textbf{K}_{kl}+  y_{k}\textbf{K}_{jl}+ y_{l}\textbf{K}_{kj}\big)\\
\!\!\!&&\!\!\!\qquad\qquad\qquad\qquad \qquad\qquad\qquad\qquad \qquad\qquad\qquad\quad  - 2E_{jk,l|m}y^m\Big\}y^i.\label{dh}
\end{eqnarray}
\eqref{dh} give us \eqref{DD}.
\end{proof}

\bigskip
Now, we show  that the covariant derivative of Berwald  curvature of 2-dimensional  generalized Berwald $(\alpha,\beta)$-metric with vanishing S-curvature satisfies an interesting relation which will play an important role in the proof of  Theorem \ref{MainTHM}.

\begin{lem}\label{B2}
The Berwald curvature of any non-Riemannian 2-dimensional  generalized Berwald $(\alpha,\beta)$-metric with vanishing S-curvature  satisfies following
\begin{eqnarray}
\nonumber h^m_pB^p_{\ jkl|s}y^s\!\!\!&=&\!\!\ s^m_{\ 0|0}(\alpha_{jkl}Q+\alpha_{jk}Q_l+\alpha_{lk}Q_j+\alpha_{lj}Q_k+\alpha Q_{jkl}+\alpha_lQ_{jk}+\alpha_jQ_{lk}+\alpha_kQ_{jl})\\
\nonumber\!\!\!&&\!\!\ +s^m_{\ 0}(\alpha_{jkl}Q_{|0}+\alpha_{jk}Q_{l|0}+\alpha_{lk}Q_{j|0}+\alpha_{lj}Q_{k|0}+\alpha Q_{jkl|0}+\alpha_lQ_{jk|0}+\alpha_jQ_{lk|0}\\
\!\!\!&&\!\!\ +\alpha_kQ_{jl|0})+A^m_{\ l}X_{jk} +A^m_{\ j}X_{lk}+A^m_{\ k}X_{jl}+B^m_{\ l}Y_{jk} +B^m_{\ j}Y_{lk} +B^m_{\ k}Y_{jl}=0,\label{048} \ \ \ \ \ \ \
\end{eqnarray}
where
\begin{eqnarray*}
&&X_{jk}:=Q_{|0}\alpha_{jk}+Q_{k|0}\alpha_j+Q_{j|0}\alpha_k+ Q_{jk|0}\alpha,\\
&&Y_{jk}:=Q\alpha_{jk}+Q_{k}\alpha_j+Q_{j}\alpha_k+\alpha Q_{jk},\\
&&A^m_{\ l}:=s^m_{\ l}-F^{-2}s^0_{\ l}y^m,\\
&&B^m_{\ l}:=A^m_{\ l|s}y^s=s^m_{\ l|0}-F^{-2}s^0_{\ l|0}y^m.
\end{eqnarray*}
\end{lem}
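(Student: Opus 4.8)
The plan is to reduce the spray to a linear correction of the Riemannian spray, read off the Berwald curvature by three vertical differentiations, take its horizontal derivative along geodesics, and finally invoke the surface identity of Lemma~\ref{LD} to force the projected expression to vanish. First I would extract the consequences of the hypotheses. Since $F$ is a generalized Berwald metric, Lemma~\ref{M} gives $b=\|\beta\|_\alpha=\text{const}$, hence $r_i+s_i=0$ by \eqref{bc}. As ${\bf S}=0$ and $F$ is not Riemannian, Lemma~\ref{LS} forces $\beta$ to be a Killing form of constant length, so $r_{ij}=0$ and $s_i=0$; in particular $r_{00}=r_0=s_0=0$. Substituting into \eqref{G1} collapses the spray to
\[
G^i=G^i_\alpha+\alpha Q s^i_{\ 0}.
\]

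Next I would compute the Berwald curvature. Put $f:=\alpha Q$. Because $G^i_\alpha$ is quadratic in $y$, $s^i_{\ m}$ is $y$-independent, and $\partial_{y^l}s^i_{\ 0}=s^i_{\ l}$ with $\partial_{y^k}\partial_{y^l}s^i_{\ 0}=0$, three differentiations give
\[
B^i_{\ jkl}=f_{y^jy^ky^l}\,s^i_{\ 0}+f_{y^ky^l}\,s^i_{\ j}+f_{y^jy^l}\,s^i_{\ k}+f_{y^jy^k}\,s^i_{\ l},
\]
and expanding $f_{y^jy^ky^l}=(\alpha Q)_{y^jy^ky^l}$ and $f_{y^ky^l}=Y_{kl}$ by the Leibniz rule reproduces the parenthetical groups of \eqref{048}. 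Applying $(\cdot)_{|s}y^s$ and then $h^m_p$, I would use three structural facts that trim the result to exactly the pattern of \eqref{048}: first, $y_is^i_{\ 0}=0$ (antisymmetry of $s_{ij}$ together with $s_0=0$), so $h^m_p$ acts trivially on the $s^i_{\ 0}$- and $s^i_{\ 0|0}$-terms, leaving $s^m_{\ 0}$ and $s^m_{\ 0|0}$; second, $\alpha$ and all its vertical derivatives are horizontally parallel along $F$-geodesics, since the spray difference $\alpha Q s^i_{\ 0}$ is $\alpha$-orthogonal to $y$, giving $\alpha_{|0}=\alpha_{j|0}=\alpha_{jk|0}=\cdots=0$, which is why only $Q$-derivatives carry the $|0$ in the $X$-groups and in the coefficient of $s^m_{\ 0}$; third, $y_{i|0}=0$ and hence $h^m_{i|0}=0$ (the Landsberg contraction vanishes), so that $h^m_is^i_{\ j|0}=B^m_{\ j}$ exactly. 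Identifying $A^m_{\ l}=h^m_ps^p_{\ l}$, $B^m_{\ l}=A^m_{\ l|s}y^s$, $Y_{jk}=f_{y^jy^k}$, and $X_{jk}$ as the $Q$-part of $f_{y^jy^k|0}$ then yields the right-hand side of \eqref{048}.

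For the vanishing I would argue as follows. From \eqref{S}, the volume term is linear in $y$, so ${\bf S}=0$ gives $E_{ij}=\tfrac12 B^m_{\ mij}=\tfrac12{\bf S}_{y^iy^j}=0$; thus $E$ and all its derivatives vanish and \eqref{GDW2} degenerates to $D^i_{\ jkl}=B^i_{\ jkl}$. By Lemma~\ref{LD}, $D^i_{\ jkl|s}y^s=\mathfrak{D}_{jkl}y^i$, so $B^p_{\ jkl|s}y^s=\mathfrak{D}_{jkl}y^p$; contracting with $h^m_p$ and using $h^m_py^p=0$ gives $h^m_pB^p_{\ jkl|s}y^s=0$, which closes the chain \eqref{048}.

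The hard part will be the third paragraph above: organizing the third-order vertical and first-order horizontal derivatives into the precise symmetric pattern of \eqref{048}, and checking the three trimming identities so that the $\alpha$-derivative contributions regroup cleanly with the $Q$-derivative ones and the projection $h^m_p$ distributes as claimed. By contrast, the concluding vanishing is conceptually short, resting only on $E=0$ turning ${\bf B}$ into ${\bf D}$ and on the surface identity of Lemma~\ref{LD}.
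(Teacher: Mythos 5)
Your strategy coincides with the paper's: Lemma \ref{LS} gives $r_{ij}=0$, $s_j=0$, hence the spray collapses to \eqref{045}; three vertical derivatives give \eqref{046}; projecting with $h^m_i$ gives \eqref{047}; and the vanishing of $h^m_pB^p_{\ jkl|s}y^s$ rests on Lemma \ref{LD}. Your concluding step is only a repackaging of the paper's: you use ${\bf S}=0\Rightarrow E_{ij}=\tfrac{1}{2}{\bf S}_{y^iy^j}=0$ so that \eqref{GDW2} gives ${\bf D}={\bf B}$, then apply Lemma \ref{LD} and $h^m_py^p=0$, whereas the paper differentiates \eqref{GDW2} horizontally, invokes ${\bf H}=0$, and then applies Lemma \ref{LD}; the two are equivalent and both sound.

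The genuine gap is your second ``structural fact.'' The orthogonality $s_{00}=0$ does give the scalar identity $\alpha_{|0}=0$, but it does not make the vertical derivatives of $\alpha$ horizontally parallel: $\alpha_j$, $\alpha_{jk}$, $\alpha_{jkl}$ are tensors, and their horizontal covariant derivative along $F$-geodesics picks up the difference of the two connections. Explicitly, with $G^m-G^m_{\alpha}=\alpha Q s^m_{\ 0}$ one finds
\[
\alpha_{j|0}=-2\big(G^m-G^m_{\alpha}\big)\alpha_{y^jy^m}-\alpha_{y^m}\,\partial_{y^j}\big(G^m-G^m_{\alpha}\big)
=-2Qs_{j0}+Qs_{j0}=-Qs_{j0},
\]
which is nonzero unless $\beta$ is closed, i.e.\ precisely in the Berwald case the lemma is meant to go beyond; likewise $\alpha_{jk|0}$ and $\alpha_{jkl|0}$ do not vanish. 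Consequently the Leibniz expansion of $\big(h^m_iB^i_{\ jkl}\big)_{|s}y^s$ contains additional terms such as $s^m_{\ 0}\,\alpha_{j|0}Q_{lk}=-Q\,s_{j0}\,Q_{lk}\,s^m_{\ 0}$ and $A^m_{\ l}\,Q_k\,\alpha_{j|0}$, which your argument discards on the strength of a false identity. (In fairness, the paper's own passage from \eqref{047} to \eqref{048} performs the same truncation without comment, so your reconstruction does land exactly on the stated formula; but the justification you offer does not hold, and a complete proof would have to show that the totality of these $\alpha_{\cdots|0}$ terms cancels, or else carry them through the subsequent contractions.)
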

\begin{proof}
By Lemma \ref{LS}, we have $r_{ij}=0$ and $s_j=0$. In this case, (\ref{G1}) reduces to following
\begin{eqnarray}
G^i=G^i_{\alpha}+\alpha Q s^i_{\ 0}.\label{045}
\end{eqnarray}
Taking three vertical derivations of  (\ref{045})  with respect to $y^j$, $y^l$ and $y^k$ give us
\begin{eqnarray}
\nonumber B^i_{\ jkl}=\!\!\!\!&&\!\!\!\! s^i_{\ 0}\big\{\alpha Q_{jkl}+\alpha_lQ_{jk}+\alpha_jQ_{lk}+\alpha_kQ_{jl}+\alpha_{jkl}Q+\alpha_{jk}Q_l+\alpha_{lk}Q_j+\alpha_{lj}Q_k\big\}\\
\nonumber \!\!\!\!&+&\!\!\!\! s^i_{\ j}\big\{Q\alpha_{lk}+Q_{k}\alpha_l+Q_{l}\alpha_k+\alpha Q_{lk}\big\}+ s^i_{\ k}\big\{Q\alpha_{jl}+Q_{j}\alpha_l+Q_{l}\alpha_j+\alpha Q_{jl}\big\}\\
\!\!\!\!&&\!\!\!\! \qquad\qquad\qquad\qquad\qquad\qquad\qquad\ +s^i_{\ l}\big\{Q\alpha_{jk}+Q_{k}\alpha_j+Q_{j}\alpha_k+\alpha Q_{jk}\big\}.\label{046}
\end{eqnarray}
Contracting (\ref{046}) with $h^m_i$ implies that
\begin{eqnarray}
\nonumber h^m_iB^i_{\ jkl}=   \big\{\alpha_{jkl}Q+ \alpha_{jk}Q_l\!\!\!\!&+&\!\!\!\! \alpha_{lk}Q_j+\alpha_{lj}Q_k+\alpha Q_{jkl}+\alpha_lQ_{jk}+\alpha_jQ_{lk}+\alpha_kQ_{jl}\big\}s^m_{\ 0}\\
\nonumber\!\!\!\!\!\!&+&\!\!\!\! \big\{Q\alpha_{jk}+Q_{k}\alpha_j+Q_{j}\alpha_k+\alpha Q_{jk}\big\}(s^m_{\ l}-F^{-2}s^0_{\ l}y^m)\\
\!\!\!\!\!\!&+&\!\!\! \nonumber \big\{Q\alpha_{lk}+Q_{k}\alpha_l+Q_{l}\alpha_k+\alpha Q_{lk}\big\}(s^m_{\ j}-F^{-2}s^0_{\ j}y^m)\\
\!\!\!\!\!\!&+&\!\!\!  \big\{Q\alpha_{jl}+Q_{j}\alpha_l+Q_{l}\alpha_j+\alpha Q_{jl}\big\}(s^m_{\ k}-F^{-2}s^0_{\ k}y^m).\label{047}
\end{eqnarray}
On the other hand, by taking a horizontal derivation of Douglas curvature along Finslerian geodesics and contracting the result with $h^m_i$, we get the following
\begin{equation}
h^m_iD^i_{\ jkl|s}y^s=h^m_iB^i_{\ jkl|s}y^s-\frac{2}{3}\Big\{H_{jk}h^m_l+H_{kl}h^m_j+H_{lj}h^m_k\Big\}.\label{GD3}
\end{equation}
Contracting \eqref{DD} with $h^m_i$ yields
\begin{equation}
h^m_iD^i_{\ jkl|s}y^s=0.\label{GD3x}
\end{equation}
Since ${\bf S}=0$, then by definition  we get ${\bf H}=0$.  Thus, (\ref{GD3}) and (\ref{GD3x}) imply that
\be
h^m_iB^i_{\ jkl|s}y^s=0.\label{GD4}
\ee
We have  $h^m_{\ i|s}=0$. Then,  by considering  (\ref{GD4}), we have
\be
\big(h^m_iB^i_{\ jkl}\big)_{|s}y^s=h^m_iB^i_{\ jkl|s}y^s=0.\label{GD4x}
\ee
Therefore, taking a horizontal derivation of (\ref{047}) along Finslerian geodesic and considering \eqref{GD4x} give us  \eqref{048}.
\end{proof}

\bigskip

\noindent
{\bf Proof of Theorem \ref{MainTHM}:} Taking a horizontal derivation of  $y_ms^m_{\ 0}=0$  with respect to the Berwald connection of $F$ implies that
\be
y_{m|0}s^m_{\ 0}+y_ms^m_{\ 0|0}=0.\label{alaki2}
\ee
 Since $y_{m|0}=0$, then \eqref{alaki2} reduces to following
\be
y_ms^m_{\ 0|0}=0.\label{ys}
\ee
By contracting (\ref{048}) with $y_m$ and considering \eqref{ys},   we get
\begin{eqnarray}
s^0_{\ l}X_{jk}+ s^0_{\ j}X_{lk}+s^0_{\ k}X_{jl}+s^0_{\ l|0}Y_{jk}+s^0_{\ j|0} Y_{lk}+s^0_{\ k|0}Y_{jl}=0.\label{0.49}
\end{eqnarray}
Since $s_j=0$, then one can get
\[
0=(s_j)_{|0}=(r_{m0}+s_{m0})s^m_{\ j}+b_ms^m_{\ j|0}
\]
 which considering  $r_{ij}=0$, it reduces to following
\begin{eqnarray}
b_ms^m_{\ j|0}=-s_{m0}s^m_{\ j}.\label{050}
\end{eqnarray}
Multiplying  (\ref{050}) with $y^j$  yields
\be
b_is^i_{\ 0|0}+s_{i0}s^i_{\ 0}=0.\label{ee1}
\ee
Also, contracting  (\ref{050}) with $b^j$ implies that
\be
b_is^i_{\ j|0}b^j=0.\label{ee2}
\ee
Multiplying (\ref{0.49}) with $b^jb^kb^l$  and  considering (\ref{ee1}) and (\ref{ee2})  give us
\be
(\alpha \Theta_2+2\Lambda_1\Theta_1+\Lambda_2Q)s_{m0}s^{m}_{\ 0}=0.\label{alaki3}
\ee
By  \eqref{alaki3}, we have two cases: if $s^{m}_{\ 0}s_{m0}=0$, since $\alpha$ is a positive-definite metric, then we find that $\beta$ is closed. Therefore,  by  (\ref{046}) we conclude that $F$  reduces to a Berwald metric. By Szabo's rigidity result for Finsler surfaces, $F$ reduces to a locally Minkowskian metric  or a Riemannian metric. On the other hand, every Finsler surface  has scalar flag curvature ${\bf K}={\bf K}(x, y)$. According to Akbar-Zadeh theorem in \cite{AZ}, a Finsler manifold $(M, F)$ of scalar flag curvature ${\bf K}={\bf K}(x, y)$ has isotropic flag curvature ${\bf K}={\bf K}(x)$ if and only if it has vanishing H-curvature ${\bf H}=0$. Thus, the obtained Riemannian metric has isotropic sectional curvature.

Now, suppose that  $F$ is not  a Riemannian metric nor a locally Minkowskian metric. Then, by \eqref{alaki3} we have  $\alpha \Theta_2+2\Lambda_1\Theta_1+\Lambda_2Q=0$. By Lemma \ref{alaki} we obtain  \eqref{unix}. In this case, since ${\bf S}=0$, then  $F$ can not be a Douglas metric. On the other hand, the Berwald curvature of 2-dimensional Finsler manifold is given by
\be
B^i_{\ jkl}=-\frac{2}{F^{2}}L_{jkl}y^i+\frac{2}{3}\Big\{E_{jk}h^i_l+E_{kl}h^i_j+E_{jl}h^i_k\Big\}.\label{2L}
\ee
See the relation (15)  in \cite{TP}. If $F$ is a Landsberg metric then by considering ${\bf S}=0$, \eqref{2L} implies that $F$ is a Berwald metric. This is a contradiction. Then, \eqref{unix} is a generalized Berwald metric  which is not Berwald, Landsberg  nor Douglas metric.
\qed

\bigskip

\bigskip

\noindent
{\bf Proof of Corollary  \ref{cor1}:} In \cite{XDg}, Xu-Deng proved that every homogeneous Finsler metric of isotropic S-curvature has vanishing S-curvature. Then, by assumption we get ${\bf S}=0$.  The Akbar-Zadeh theorem in \cite{AZ} stated that a Finsler manifold $(M, F)$ of scalar flag curvature ${\bf K}={\bf K}(x, y)$ has isotropic flag curvature ${\bf K}={\bf K}(x)$ if and only if it has vanishing H-curvature ${\bf H}=0$. On the other hand, every Finsler surface  has scalar flag curvature ${\bf K}={\bf K}(x, y)$. Thus, by Akbar-Zadeh theorem   we get ${\bf K}={\bf K}(x)$. Every scalar function on $M$ which is invariant under isometries of $(M, F)$ is a constant function. The homogeneity of $(M, F)$ and invariancy of the flag curvature under isometries of $F$ imply that ${\bf K}=constant$. Then, by Theorem \ref{MainTHM} we get the proof.
\qed

\bigskip

\noindent
{\bf Proof of Corollary  \ref{cor2}:} Let $F=\alpha\phi(s)$, $s=\beta/\alpha$, be  a two-dimensional  normal homogeneous generalized Berwald  $(\alpha,\beta)$-metric. In \cite{XDn}, Xu-Deng proved that every normal homogeneous manifold  has vanishing S-curvature and non-negative flag curvature. By Theorem \ref{MainTHM} and the same method used in the proof of Corollary  \ref{cor1}, it follows that $F$ is a Riemannian metric of non-negative constant sectional curvature or a locally Minkowskian metric.
\qed

\bigskip

\noindent
{\bf Proof of Corollary  \ref{cor3}:} Let $(G/H, F)$ be a generalized normal homogeneous Randers manifold. In \cite{ZD}, Zhang-Deng proved that $F$  has vanishing
S-curvature (Corollary 3.11). Also, they showed that any generalized normal homogeneous Randers metric has non-negative flag curvature (see Proposition 3.13 in \cite{ZD}).  Then, by Theorem \ref{MainTHM} we get the proof.
\qed

\bigskip

According to Theorem \ref{MainTHM}, every two-dimensional generalized Berwald $(\alpha, \beta)$-metric with vanishing S-curvature is Riemannian or locally Minkowskian. Every left invariant Finsler metric is a generalized Berwald  metric \cite{A15}. Here, we prove Theorem \ref{MainTHM2} which states that  left invariant Finsler metrics with  vanishing S-curvature  reduce to Riemannian metrics, only. The approach of the  proof  of  Theorem \ref{MainTHM2} is completely different from Theorem \ref{MainTHM}.

\bigskip

\noindent
{\bf Proof of Theorem \ref{MainTHM2}:} To prove a homogeneous surface with ${\bf S}=0$ is Riemannian, we only need to consider the nontrivial case, namely, a 2-dimensional non-Abelian Lie group $G$ with a left invariant Finsler metric $F$. At each $ y\in\mathfrak{g}$ with $F(y)=1$, there is a ${\bf g}_y$ orthonormal basis $e_1=y$ and $e_2$ tangent to $F=1$. At almost all non-zero $y$, the spray vector field $\eta$ is nonzero, i.e.,  $\eta(y)$ is a nonzero multiple of $e_2$. By the homogenous S-curvature formula, we have
\[
{\bf S}(y)=-{\bf I}(\eta)=-{\bf C}_y(\eta,e_2,e_2)=0.
\]
Then, ${\bf C}_y(e_2, e_2, e_2)=0$ everywhere. The speciality of 2-dimensional spaces implies  ${\bf C}=0$ everywhere, so $F$ is Riemannian. By the same method used to prove Corollary  \ref{cor1}, one can conclude that the Riemannian metric is of constant sectional curvature. This completes the proof.
\qed

\bigskip
\noindent
{\bf Proof of Corollary  \ref{cor4}:} By assumption, $F$ has isotropic Berwald curvature
\be
B^i_{\ jkl} = cF^{-1}\Big\{h_{jk}h^i_l+h_{jl}h^i_k+h_{kl}h^i_j+2C_{jkl}y^i\Big\}.\label{IB}
\ee
where $c=c(x)$ is a scalar function on $M$. In \cite{TR}, it is proved that every Finsler surface of  isotropic Berwald curvature \eqref{IB} metric has isotropic S-curvature   ${\bf S}=3cF$. By  Xu-Deng's result in \cite{XDn}, $F$ has vanishing S-curvature ${\bf S}=0$. Then, by Theorem \ref{MainTHM2}, $F$ reduces to a Riemannian metric.
\qed

\section{Some Examples  of Generalized Berwald Manifolds}
In this section, we are going to give some important examples of the class of generalized Berwald manifolds. First, by using trans-Sasakian structure, we construct a family of odd-dimensional generalized Berwald Randers metrics.

\begin{ex} $\big(\textrm{Odd-dimensional generalized Berwald Randers metrics}\big)$
Let $M$ be a differentiable manifold of dimension $2n+1$. Suppose that $\eta=\eta_i(x)dx^i$, $\xi=\xi^i\partial/\partial x^i$ and $\varphi=\varphi^i_j\partial/\partial x^i\otimes dx^j$ are a $1$-form, a vector field, and  a $(1, 1)$-tensor on $M$, respectively.
The triple $(\eta, \xi, \varphi)$ is called an almost contact structure on $M$ if it satisfies
\[
\varphi(\xi)=0,\ \ \ \ \eta(\xi)=1, \ \ \  \varphi^2 =-I+\eta \otimes \xi.
\]
A differentiable manifold of odd dimension $2n+1$ with an almost contact structure is called an almost contact manifold. Let a manifold $M$ with the $(\eta, \xi,\varphi)$ structure admits a Riemannian metric ${\bf g}$ such that
\[
{\bf g}(\varphi X, \varphi Y)={\bf g}(X,Y)-\eta(X)\eta(Y).
\]
Then $M$ is called an almost contact metric structure and ${\bf g}$ is called a compatible metric. In this case,  $(\eta, \xi,\varphi, {\bf g})$ is called almost contact metric structure. An almost contact metric structure $(\eta, \xi,\varphi, {\bf g})$ on $M$ is called a trans-Sasakian structure  if it satisfies
\[
(\nabla_X \varphi)Y=c_1\big\{{\bf g}(X,Y)\xi-\eta(Y)X\big\}+c_2\big\{{\bf g}(\varphi X,Y)\xi -\eta(Y)\varphi X\big\}
\]
for some scalar functions $c_1=c_1(x)$ and $c_2=c_2(x)$  on $M$, where $\nabla$ denotes the Levi-Civita connection of ${\bf g}$.

Now, let $(\eta, \xi,\varphi, {\bf g})$  be a  trans-Sasakian structure on $M$. Define $\alpha, \beta:TM\rightarrow [0, \infty)$ by
\begin{equation}
\forall (x,y)\in TM, \ \ \ \  \alpha(x,y):=\sqrt{{\bf{g}}_x(y,y)}, \,\,\,\,\,\,\,\beta(x,y):=\epsilon\, \eta_x(y),
\end{equation}
where $0<\epsilon<1$ be a constant. Then, for the  Randers metric $F:=\alpha+\beta$, we have
\[
||\beta||_{\alpha}=\epsilon.
\]
It follows that the class of Randers metrics induced by  trans-Sasakian manifolds $(M, \eta, \xi,\varphi, {\bf g})$ are $(2n+1)$-dimensional generalized Berwald metrics on $M$.
\end{ex}

\bigskip

Here, we give a two-dimensional Randers metric $F=\alpha+\beta$ with vanishing S-curvature. We show that if $F$ is a generalized Berwald metric then it reduces to  a Riemannian metric.

\begin{ex}\label{ex7}
Let ${\bf y}= u{\pa / \pa x} + v {\pa / \pa y} \in T_{(x, y)}\mathbb{R}^2$. Consider the Randers metric $F=\alpha+\beta$, where
$\alpha=\alpha({\bf y})$ and $\beta =\beta({\bf y})$ are given by
 \begin{eqnarray*}
{\alpha}  \!\!\!& := &\!\!\! {\sqrt{ \Big ( 1+(1-\e^2)(x^2+y^2)\Big )(u^2+v^2)
 + \Big ( 1+\e^2 + x^2+y^2\Big ) (xv-yu)^2  }
\over \Big ( 1+(1-\e^2)(x^2+y^2) \Big ) \sqrt{ 1+x^2+y^2} },\\
{\beta} \!\!\!& := &\!\!\! - { \e (xv-yu)\over 1+(1-\e^2) (x^2+y^2) },
\end{eqnarray*}
and $\e $ is a real constant (see \cite{Shpf}). $F$ is defined on the whole sphere for $|\e | < 1$. It is remarkable that, one can rewrite $F$ in a polar coordinate system,
$ x = r \cos(\theta)$, $y = r \sin(\theta)$.  Express
\[
\alpha =\sqrt{ a_{11}\mu^2+ a_{12} \mu\nu + a_{21} \nu \mu+ a_{22}\nu^2}, \ \ \ \beta= b_1 \mu +b_2 \nu,
\]
where
\begin{eqnarray*}
&&a_{11}= {1\over (1+r^2)\big(1+(1-\e^2)r^2\big)},\ \  \ a_{12} = a_{21}=0, \  \ \  a_{22} = { r^2 (1+r^2)\over \big(1+(1-\e^2)r^2\big)^2},\\
&&b_1 =0, \ \ \ \ \   \ \ \ b_2 = - {\e r^2 \over 1 + (1-\e^2) r^2 }.
\end{eqnarray*}
Let us put $A:=det(a_{ij})$. Then, we get
\begin{eqnarray*}
&&a^{11}={ r^2 (1+r^2)\over A\big(1+(1-\e^2)r^2\big)^2}, \ \ \ a^{22}={1\over A(1+r^2)\big(1+(1-\e^2)r^2\big)}, \ \ \ \  a^{12}= a^{21}=0,\\
&&b^1=0, \ \ \ \  b^2=-{\e r^2\over A(1+r^2)\big(1 + (1-\e^2) r^2\big)^2}.
\end{eqnarray*}
Therefore, we obtain
\begin{eqnarray}
\|\beta\|_{\alpha}^2=a^{ij}b_ib_j=b_ib^i={\e^2 r^4 \over A(1+r^2)\big(1 + (1-\e^2) r^2\big)^3 }\neq constant.\label{tb}
\end{eqnarray}
\eqref{tb} means that $F$ is not a generalized Berwald metric. On the other hand, a direct computation yields
\begin{eqnarray*}
&&r_{11}= r_{22}=0, \ \ \  \ \ \ r_{12}=r_{21}={ \e^3 r^3\over (1+r^2)\big (1+ (1-\e^2)r^2\big )^2},\\
&&s_{11}=  s_{22}=0, \ \  \ \ \  s_{12}={ \e r \over \big ( 1+ (1-\e^2)r^2\big  )^2 } = - s_{21}\\
&&s_1= { \e^2 r\over (1+r^2) \big ( 1+ (1-\e^2)r^2\big ) }, \ \ \ \ \  \ \ s_2= 0,\\
&& r_1=-{\e^4 r^5\over A(1+r^2)^2\big(1 + (1-\e^2) r^2\big)^4}, \ \  r_2=0.
\end{eqnarray*}
It is easy to find that  $r_{ij}+b_is_j+b_js_i=0$. Then by Lemma 3.2.1 in \cite{CSb}, we get  ${\bf S}=0$. Also, one can see that the following holds
\be
r_i+s_i=\frac{\e^2 r\big[A(1+r^2)( 1+ (1-\e^2)r^2)^3-\e^2 r^4\big]}{A(1+r^2)^2\big(1 + (1-\e^2) r^2\big)^4}.\label{rs}
\ee
According to \eqref{rs},  $F$ is  a generalized Berwald metric (equivalently, $r_i+s_i=0$) if and only if $\e=0$ or the following holds
\be
\big(1+(1-\e^2)r^2\big)^4=0.\label{ep}
\ee
\eqref{ep} contradicts with $|\e | < 1$. Therefore,  $F$ is  a generalized Berwald metric  if and only if $\e=0$  or equivalently $\beta=0$. In this case,  $F$ reduces to the  standard Riemannian metric $F=\alpha$.
\end{ex}

\bigskip

\begin{ex}\label{ex5} (Xu) It is proved that a Finsler metric $F=F(x, y)$ is of Randers type $F=\alpha+\beta$ if and only if it is a solution of the navigation problem on a Riemannian manifold $(M, {\bf h})$ (see \cite{CSb}). Zermelo navigation is an efficient method to study of Randers metrics with certain Riemannian and non-Riemannian curvature properties. More precisely, any Randers metric $F=\alpha+\beta$ on a manifold $M$ is a solution of the following Zermelo navigation problem
\[
\textbf{h}\Big(x,\frac{y}{F}-\mathcal{W}_x\Big)=1,
\]
where $\textbf{h}=\sqrt{h_{ij}(x)y^iy^j}$ is a Riemannian metric and $\mathcal{W}=\mathcal{W}^i(x){\partial}/{\partial x^i}$ is a vector field such that
\[
\textbf{h}(x,-\mathcal{W}_x)=\sqrt{h_{ij}(x)\mathcal{W}^i(x)\mathcal{W}^j(x)}<1.
\]
In fact, $\alpha$ and $\beta$ are given by
\[
\alpha=\frac{\sqrt{\lambda h^2+\mathcal{W}_0}}{\lambda},\qquad \beta=-\frac{\mathcal{W}_0}{\lambda},
\]
respectively and moreover,
\[
\lambda:=1-\|\mathcal{W}\|^2_h, \ \ \ \ \ W_0:=h_{ij}\mathcal{W}^iy^j.
\]
For more details, see   \cite{CSb}. Then, $F$ can be written as follows
\begin{equation}
\label{form of F}
F=\frac{\sqrt{\lambda \textbf{h}^2+\mathcal{W}_0^2}}{\lambda}-\frac{\mathcal{W}_0}{\lambda}.
\end{equation}
In this case, the pair $({\bf h}, \mathcal{W})$  is called the navigation data of $F$.

Now, let $G/H$ be any homogeneous manifold and $\mathfrak{g}=\mathfrak{h}+\mathfrak{m}$
is its reductive decomposition. Suppose $\mathfrak{m}=\mathfrak{m}_0+\mathfrak{m}_1$
be an $\mathrm{Ad}(H)$-invariant decomposition, in which $\mathfrak{m}_0$ is 1-dimensional
and the $\mathrm{Ad}(H)$-action on $\mathfrak{m}_0$ is trivial. Let ${\bf h}$ be a $G$-invariant
Riemannian metric on $G/H$, such that $\mathfrak{m}_0$ and $\mathfrak{m}_1$ are $h$-orthogonal
to each other. Let $\mathcal{W}$ be a $G$-invairant vector field on $G/H$, such that $\mathcal{W}(o)\in\mathfrak{m}_0\backslash\{0\}$.
Then, the navigation process with the data $({\bf h}, \mathcal{W})$ provides a $G$-invariant generalized Berwald Randers metric with ${\bf S}=0$ (see \cite{HD}).
\end{ex}

\begin{ex} \label{ex6} (Xu) As we mentioned in Introduction, the Bao-Shen's Randers metrics on $\mathbb{S}^3$ are concrete generalized Berwald metrics, namely they  are not Berwaldian. Any non-Riemannian homogeneous Randers sphere $\mathbb{S}^3=SU(3)/SU(2)$ (including Bao-Shen's Randers metrics) satisfies ${\bf S}=0$ with constant pointwise $\|\beta\|_{\alpha}$-norms. Then, every non-Riemannian homogeneous Randers sphere is a generalized Berwald metric.  An $\mathbb{S}^3\times \mathbb{S}^1$, in which $\mathbb{S}^3=SU(3)/SU(2)$ and the navigation field is tangent to the $\mathbb{S}^3$-factor, is a 4-dimensional  generalized Berwald Randers metric (see \cite{X}).
\end{ex}

\bigskip

\noindent
{\bf Acknowledgments:} The authors are so  grateful to Ming Xu for his valuable comments on this manuscript.  Likewise, we thank him for providing us with examples \ref{ex5} and \ref{ex6} which improve the quality of our manuscript. Also, we  are thankful to  Behzad Najafi, Mansoor Barzegari  and  Libing Huang  for their reading of this manuscript  and their comments.

\bigskip

\noindent
Akbar Tayebi and Faezeh Eslami\\
Department of Mathematics, Faculty of Science\\
University of Qom \\
Qom. Iran\\
Email:\ akbar.tayebi@gmail.com\\
Email:\ faezeh.eslami70@gmail.com

\begin{thebibliography}{00}
%--------------------------------------------------------------------------------------------------------------------------------------------------
 \bibitem{AZ} H. Akbar-Zadeh, {\it Sur les espaces de Finsler \'{a} courbures sectionnelles constantes}, Bull. Acad. Roy. Bel. Cl, Sci, 5e S\'{e}rie
 - Tome LXXXIV (1988), 281-322.
%-------------------------------------------------------------------------------------------------------------------------------------------------------------------------------
\bibitem{A15} B. Aradi, {\it Left invariant Finsler manifolds are generalized Berwald}, Eur.\ J.\ Pure Appl.\ Math.\ \textbf{8}(1) (2015), 118--125.
%-------------------------------------------------------------------------------------------------------------------------------------------------------------------------------
%\bibitem{ABT} B. Aradi, M. Barzagari and A. Tayebi, {\it Conjugate and conformally conjugate parallelisms on Finsler manifolds}, Periodica. Math. Hungarica, {\bf 74}(2017), 22-30.
%-------------------------------------------------------------------------------------------------------------------------------------------------------------------------------
%\bibitem{AK} B. Aradi and D. Cs. Kert\'{e}sz, {\it A characterization of holonomy invariant functions on tangent bundles}, Balkan. Jour. Geom. Appl. {\bf 19}(2) (2014), 1-10.
%-------------------------------------------------------------------------------------------------------------------------------------------------------------------------------
\bibitem{As} G. S. Asanov, {\it Finsleroid-Finsler space with Berwald and Landsberg conditions}, arXiv:math0603472.
%-------------------------------------------------------------------------------------------------------------------------------------------------------------------------------
\bibitem{BM} N. Bartelme${\ss}$ and  V. S. Matveev, {\it Monochromatic metrics are generalized Berwald}, Differ. Geom. Appl. {\bf 58}(2018), 264-271.
%-------------------------------------------------------------------------------------------------------------------------------------------------------------------------------
\bibitem{CSb} X. Cheng and Z. Shen, {\it Finsler Geometry- An Approach via Randers Spaces}, Springer, Heidelberg and Science Press, Beijing, 2012.
%-------------------------------------------------------------------------------------------------------------------------------------------------------------------------------
\bibitem{ChSh3} X. Cheng and Z. Shen, {\it A class of Finsler metrics with isotropic S-curvature}, Israel J. Math. {\bf 169}(2009), 317-340.
%-------------------------------------------------------------------------------------------------------------------------------------------------------------------------------
%\bibitem{D}  S. Deng, {\it Homogeneous Finsler Spaces},  Springer, New York, 2012.
%-------------------------------------------------------------------------------------------------------------------------------------------------------------------------------
\bibitem{HD} Z. Hu and S. Deng, {\it Homogeneous Randers spaces with isotropic S-curvature and positive flag curvature}, Math. Z. {\bf 270}(2012), 989-1009.
%-------------------------------------------------------------------------------------------------------------------------------------------------------------------------------
\bibitem{HM}   L. Huang and  X. Mo, {\it Geodesics of invariant Finsler metrics on a Lie group}, preprint.
%-------------------------------------------------------------------------------------------------------------------------------------------------------------------------------
\bibitem{I} S. Ivanov, {\it Monochromatic Finsler surfaces and a local ellipsoid characterisation}, Proc. Am. Math. Soc. {\bf 146}(2018), 1741-1755.
%-------------------------------------------------------------------------------------------------------------------------------------------------------------------------------
\bibitem{MW} X. Mo and X. Wang, {\it On Finsler metrics of constant S-curvature}, Bull. Korean Math. Soc. {\bf 50}(2) (2013),  639-648.
%-------------------------------------------------------------------------------------------------------------------------------------------------------------------------------
%\bibitem{CSY} X. Cheng, Z. Shen and  G. Yang, {\it On a class of two-dimensional Finsler manifolds of isotropic S-curvature}, Sci. China Math. {\bf 61}(2018),  57-72.
%-------------------------------------------------------------------------------------------------------------------------------------------------------------------------------
\bibitem{CWW}  X. Cheng, H. Wang and M. Wang,  {\it $(\alpha,\beta)$-metrics with  relatively isotropic mean Landsberg curvature}, Publ. Math. Debrecen. {\bf 72}(2008), 475-485.
%-------------------------------------------------------------------------------------------------------------------------------------------------------------------------------
\bibitem{ShC} Z. Shen, {\it On a class of Landsberg metrics in Finsler geometry}, Canadian. J. Math. {\bf 61}(6) (2009), 1357-1374.
%------------------------------------------------------------------------------------------------------------------
\bibitem{ShDiff} Z. Shen, {\it Differential Geometry of Spray and Finsler Spaces}, Kluwer Academic Publishers,  2001.
%-------------------------------------------------------------------------------------------------------------------------------------------------------------------------------
\bibitem{ShSK} Z. Shen, {\it Finsler metrics with ${\bf K}=0$ and ${\bf S}=0$}, Canadian J. of Math. {\bf 55}(2003),  112-132.
%-------------------------------------------------------------------------------------------------------------------------------------------------------------------------------
\bibitem{Shpf} Z. Shen, {\it Two-dimensional Finsler metrics of constant curvature}, Manuscripta Mathematica. {\bf 109}(3) (2002), 349-366.
%-----------------------------------------------------------------------------------
%\bibitem{Shvol} Z. Shen, {\it  Volume comparison and its applications in Riemann-Finsler geometry}, Adv. Math. {\bf 128}(1997), 306-328.
%-------------------------------------------------------------------------------------------------------------------------------------------------------------------------------
%\bibitem{Y}   G. Yang, {\it A note on a class of Finsler metrics of isotropic S-curvature}, arXiv:1310.3463v2.
%-------------------------------------------------------------------------------------------------------------------------------------------------------------------------------
%\bibitem{Szabo1} Z. I. Szab\'{o}, {\it Positive definite Berwald spaces (structure theorems on Berwald spaces)}, Tensor (N.S.), {\bf 35} (1981), 25-39.
%-------------------------------------------------------------------------------------------------------------------------------------------------------------------------------
\bibitem{Szabo} Z. I. Szab\'{o}, {\it Generalized spaces with many isometries}, Geometria Dedicata, {\bf 11}(1981), 369-383.
%-------------------------------------------------------------------------------------------------------------------------------------------------------------------------------
\bibitem{SzSz1} Sz.~Szak\'al and J.~Szilasi, {\it A new approach to generalized Berwald manifolds \relax{I}}, SUT J. Math.\ \textbf{37}(2001),  19--41.
%-------------------------------------------------------------------------------------------------------------------------------------------------------------------------------
\bibitem{SzSz2} J.~Szilasi and Sz.~Szak\'al, {\it A new approach to generalized Berwald manifolds \relax{II}}, Publ.~Math.~Debrecen,  \textbf{60}(2002), 429--453.
%-------------------------------------------------------------------------------------------------------------------------------------------------------------------------------
%\bibitem{SzLK11} J.~Szilasi, R.~L.~Lovas and D.~\relax{Cs}.~Kert\'esz, {\it Several ways to a Berwald manifold -- and some steps beyond}, Extracta Math.\ \textbf{26}(2011), 89--130.
%-------------------------------------------------------------------------------------------------------------------------------------------------------------------------------
\bibitem{BSZI} J. Szilasi, R. L. Lovas, and D. Cs. Kert\'{e}sz, {\it Connections, Sprays and Finsler structures}, World Scientific, 2014.
%-------------------------------------------------------------------------------------------------------------------------------------------------------------------------------
\bibitem{TB} A. Tayebi and M. Barzegari, {\it Generalized Berwald spaces with  $(\alpha, \beta)$-metrics}, Indagationes. Math. (N.S.). {\bf 27}(2016), 670-683.
%-------------------------------------------------------------------------------------------------------------------------------------------------------------------------------
\bibitem{TP} A. Tayebi and E. Peyghan, {\it  On Douglas surfaces}, Bull. Math. Soc. Science. Math. Roumanie. Tome {\bf 55} (103), No 3,  (2012), 327-335.
%-------------------------------------------------------------------------------------------------------------------------------------------------------------------------------
\bibitem{TR} A. Tayebi and   M. Rafie Rad, {\it S-curvature of isotropic Berwald metrics}, Sci. China. Series A: Math.  {\bf 51}(2008), 2198-2204.
%-------------------------------------------------------------------------------------------------------------------------------------------------------------------------------
\bibitem{XDg} M. Xu and S. Deng, {\it Killing frames and S-curvature of homogeneous Finsler spaces}, Glasgow. Math. Journal. {\bf 57}(2015), 457-464.
%-------------------------------------------------------------------------------------------------------------------------------------------------------------------------------
\bibitem{XDn} M. Xu and S. Deng, {\it Normal homogeneous Finsler spaces}, Transformation Groups. {\bf 22}(2017), 1143-1183.
%-------------------------------------------------------------------------------------------------------------------------------------------------------------------------------
\bibitem{Vin0} C. Vincze, {\it On a special type of generalized Berwald manifolds: semi-symmetric linear connections preserving the Finslerian length of tangent vectors}, European Journal of Math. {\bf 3}(2017), 1098-1171.
%-------------------------------------------------------------------------------------------------------------------------------------------------------------------------------
%\bibitem{Vin1} C. Vincze, {\it On geometric vector fields of Minkowski spaces and their applications}, Differ. Geom. Appl. {\bf 24}(1) (2006),  1-20.
%-------------------------------------------------------------------------------------------------------------------------------------------------------------------------------
\bibitem{V1} C. Vincze, {\it On Randers manifolds with semi-symmetric compatible linear connections}, Indagationes. Math. (N.S.). {\bf 26}(2015), 363-379.
%-------------------------------------------------------------------------------------------------------------------------------------------------------------------------------
\bibitem{Vin2} C. Vincze, {\it On generalized Berwald manifolds with semi-symmetric compatible linear connections}, Publ. Math. Debrecen. {\bf 83}(2013),  741-755.
%-------------------------------------------------------------------------------------------------------------------------------------------------------------------------------
\bibitem{V3} C. Vincze, T. R. Khoshdani, S. M. Z. Gilani, and M. Ol\'{a}h, {\it  On compatible linear connections
of two-dimensional generalized Berwald manifolds: a classical approach}, Commun. Math. {\bf 27}(2019), 51-68.
%-------------------------------------------------------------------------------------------------------------------------------------------------------------------------------
%\bibitem{V2} C. Vincze, T. R. Khoshdani, and M. Ol\'{a}h, , {\it  On generalized Berwald surfaces with locally symmetric fourth root metrics}, Balkan J. Geom. Appl., {\bf 24}(2019), 63-78.
%-------------------------------------------------------------------------------------------------------------------------------------------------------------------------------
\bibitem{X} M. Xu, {\it Geodesic orbit spheres and constant curvature in Finsler geometry}, Differ. Geom. Appl. {\bf 61}(2018), 197-206.
%-------------------------------------------------------------------------------------------------------------------------------------------------------------------------------
\bibitem{ZD} L. Zhang and S. Deng, {\it On generalized normal homogeneous Randers spaces}, Publ. Math. Debrecen. {\bf 90}(2017), 507-523.
%-------------------------------------------------------------------------------------------------------------------------------------------------------------------------------
\end{thebibliography}
\end{document}